\documentclass[12pt]{amsart}
\usepackage{amsmath,amssymb,cite}

\def\rn{\mathbb R^n}

\def\N{\mathbb N}
\def\Z{\mathbb Z}
\def\K{\mathbb K}

\def\M{\mathfrak M}
\def\vp{\varphi}

\def\ls{\lesssim}

\newcommand{\lt}{\left(}
\newcommand{\rt}{\right)}
\newcommand{\ds}{\mathrm{\,d}s}
\newcommand{\dt}{\mathrm{\,d}t}

\newcommand{\dy}{\mathrm{\,d}y}
\newcommand{\dz}{\mathrm{\,d}z}
\newcommand{\dtau}{\mathrm{\,d}\tau}

\newcommand{\esssup}{\operatornamewithlimits{ess\, sup\,}}

\newcommand{\supp}{\operatorname{supp}}
\newcommand{\MM}{\mathfrak M_+}

\theoremstyle{plain}
\newtheorem{theorem}{Theorem}[section]
\newtheorem{lemma}[theorem]{Lemma}
\newtheorem{corollary}[theorem]{Corollary}
\newtheorem{proposition}[theorem]{Proposition}

\theoremstyle{definition}

\newcounter{bcount}
\newcommand{\Bdef}[1]{\refstepcounter{bcount}\label{bcounter#1}}
\newcommand{\B}[1]{B_{\ref{bcounter#1}}}

%\headheight=12pt

%%%%%%%%%%% For IMPAN journals:

\frenchspacing

\textwidth=13.5cm
\textheight=23cm
\parindent=16pt

\oddsidemargin=1cm
\evensidemargin=1cm

\topmargin=-1cm

%%%%%%%%% Bibliography
\newtoks\by
\newtoks\paper
\newtoks\book
\newtoks\jour
\newtoks\yr
\newtoks\pages
\newtoks\vol
\newtoks\publ
\newtoks\eds
\newtoks\proc
\def\ota{{\hbox{???}}}
\def\cLear{\by=\ota\paper=\ota\book=\ota\jour=\ota\yr=\ota
\pages=\ota\vol=\ota\publ=\ota}
\def\endpaper{\the\by, \textit{\the\paper},
{\the\jour} {\the\vol} (\the\yr), \the\pages.\cLear}
\def\endbook{\the\by, \textit{\the\book}, \the\publ.\cLear}
\def\endprep{\the\by, \textit{\the\paper}, \the\jour.\cLear}
\def\endproc{\the\by, \textit{\the\paper}, \the\publ, \the\pages.\cLear}
\def\name#1#2{#1 #2}
\def\et{ and }
%%%%%%%%%%%%%%%%%%%%%%%%%%%%%%%%%%%%%%%%%%%%%%%%%%%%%

%\numberwithin{equation}{section}

\hyphenation{re-ar-ran-ge-ment-in-va-ri-ant}

\begin{document}

\title[Weighted inequalities for iterated Copson operators]
{Weighted inequalities for iterated Copson integral operators}
%\date{\today}
\author {Martin K\v repela and Lubo\v s Pick}

\address{Martin K\v repela,
Institute of Mathematics, University of
Freiburg,  Ernst-Zermelo-Stra\ss e 1, 791 04 Freiburg, Germany}
\email{martin.krepela@math.uni-freiburg.de}

\address{Lubo\v s Pick, Department of Mathematical Analysis\\
Faculty of Mathematics and Physics\\
Charles University\\
Sokolovsk\'a~83\\
186~75 Praha~8\\
Czech Republic}
\email{pick@karlin.mff.cuni.cz}

\subjclass[2010]{47G10, 47B38, 26D10}
\keywords{weighted Copson inequalities; discretization; integral operators; iterated operators.}

\begin{abstract}
We solve a long-standing open problem in theory of weighted inequalities concerning iterated Copson operators. We use a constructive approximation method based on a new discretization principle that is developed here. In result, we characterize all weight functions $w,v,u$ on $(0,\infty)$ for which there exists a constant $C$ such that the inequality
\[
\left(\int_0^{\infty}\!\!\left(\int_t^\infty \!\!\left(\int_s^{\infty}\!\!h(y)\dy\right)^{\!m}\!\!u(s) \ds\right)^{\!\frac{q}{m}}\!\!w(t)\dt\right)^{\!\frac{1}{q}}
\!\leq C
\left(\int_0^{\infty}\!\!h(t)^pv(t)\dt\right)^{\!\frac{1}{p}}
\]
holds for every non-negative measurable function $h$ on $(0,\infty)$, where $p,q$ and $m$ are positive parameters. We assume that $p\geq 1$ because otherwise the studied inequality cannot hold for nontrivial weights, but otherwise $p,q$ and $m$ are unrestricted.
\end{abstract}

\maketitle

\section{Introduction and the main result}

In the theory of weighted inequalities, a notorious difficult problem which has been open for many years is that of characterization of inequalities involving the Copson integral $\int_t^{\infty}h(s)\ds$. A~specific and particularly important variant of these concerns \textit{iterated} Copson integral. This type of a~problem has been surfacing for some time in various research fields, quite different in nature. The inequality involving the iterated Copson operator is traditionally considered as a very difficult one to handle, as it contains three independent weights and three possibly different power parameters. Solving this problem is the aim of this paper.\\

More precisely stated, we will concentrate on the inequality
\begin{equation}\label{E:main}
\left(\int_0^{\infty}\hspace{-5pt}\left(\int_t^\infty \hspace{-5pt}\left(\int_s^{\infty}\hspace{-5pt}h(y)\dy\right)^m \hspace{-5pt} u(s) \ds\right)^{\frac{q}{m}} \hspace{-5pt} w(t)\dt\right)^{\frac{1}{q}}
\hspace{-5pt}\le C
\left(\int_0^{\infty}\hspace{-5pt}h(t)^pv(t)\dt\right)^{\frac{1}{p}}\hspace{-5pt},
\end{equation}
in which $h$ is a non-negative measurable function on $(0,\infty)$ and $C$ is independent of $h$. The task is to characterize those parameters and weights for which a~constant $C$, independent of the~positive function $h$, can be found so that~\eqref{E:main} holds. Our approach to this problem involves a~mixture of discretization and anti-discretization techniques combined with certain duality principles with respect to the integral pairing. The restriction $p\geq 1$ is necessary because otherwise the inequality~\eqref{E:main} cannot hold for all $h\geq0$ unless at least one of the weights  $u,w$ is trivial. To see this, it suffices to construct a function $h$ such that the quantity on the right is finite but which is not locally integrable (which is easy).\\

There is a significant motivation for this problem to be cracked. The inequalities of the form~\eqref{E:main} govern surprisingly many important principles in analysis. One can, for example, recall that the search for an~explicit formula for the optimal rearrangement-invariant function norm in a Sobolev inequality (which is an indispensable tool in the regularity theory of solutions to degenerate elliptic partial differential equations) -- see~\cite[Theorem~3.1]{T5} and the references therein. It turns out that the problem is equivalent to a~balance condition which can be transformed into an inequality of the form~\eqref{E:main}. In a different world, see for instance the recent work~\cite{ACS}, weighted inequalities involving Copson integrals are used to investigate the continuity properties of local solutions to the $p$-Laplace equation
\[
-\operatorname{div}(|\nabla u|^{p-2}\nabla u)=f(x)\quad \textup{in}\ \Omega,
\]
where $\Omega$ is an~appropriate bounded open subset of $\rn$. Certain theoretical parts of functional analysis, such as the theory of function spaces or interpolation theory more often than not produce questions that lead to inequalities involving Copson integrals. For example, in the study of embeddings of Lorentz-type structures involving two possibly different weighted integral means (which itself has plenty of underlying applications), see~\cite{kostlivec} and its references, one makes a~direct use of such inequalities.\\

There are several angles from which the inequality~\eqref{E:main} can be viewed. For example, one may notice that the expression $\int_t\sp{\infty}h(s)\ds$ represents a~nonincreasing function on $(0,\infty)$ when $h$ is non-negative. This may enable us to view~\eqref{E:main} as an inequality working on the cone of monotone functions. Inequalities involving such functions on an interval have been seeing a~constant boom ever since 1990 and a vast amount of literature is nowadays available, see e.g.~\cite{AM, CGS,CPSS,CS1,CS2,CS3,GP2,G1,G2,GHS,HM,La,Neu,Oi,Saw,Si,SS,Sor,St,St2}. Another possible point of view is to consider the inequality~\eqref{E:main} as a~certain embedding relation between two structures, possibly function spaces. This observation leads to a~possible broadening of techniques that might come useful, in particular, e.g.~the duality principles and reduction theorems.\\

The use of discretization techniques has an interesting history which seems to be far from being finished. The classic book of this discipline is~\cite{KGGE}. A~significant breakthrough was brought to the field with the discovery of a~new approach based on discretization and anti-discretization techniques in \cite{GP}. Using these new thoughts, embeddings of classical Lorentz spaces were characterized in cases that had before resisted for years, and (very useful) two-operator inequalities could be treated -- see~\cite{CGMP1} and~\cite{CGMP2}. Applications to duality theorems and reduction principles naturally followed -- see e.g.~\cite{GP4,GP-nrt}.\\

Interestingly, the discretization techniques were at one stage considered too technical by several authors, and some nontrivial effort was spent in order to recover the results by methods that would avoid discretization. This, however, was met merely with a~little success (see e.g.~\cite{GS1,GS2,GK}), and the approximation techniques stood their ground.  It should be mentioned that there is a~significant demand for obtaining two-sided estimates of the optimal (that is, smallest possible) constant in~\eqref{E:main}. This often arises from tasks that involve, in some way, two different integral mean operators, and it appears, for example, in results concerning the optimality of function spaces in Sobolev-type embeddings, carried out e.g.~in~\cite{EKP,T2,T3,T5,CP-gauss}. Yet another reason for the interest in the inequalities of the type~\eqref{E:main} is their connection to the grand Lebesgue spaces and the small Lebesgue spaces. These spaces were introduced in~\cite{IS} and~\cite{Fio} in connection with integrability of Jacobians and in a~direct connection to problems in mechanics and mathematical physics concerning, for instance, mappings of a~finite distortion. The connection to weighted inequalities was pointed out several times, see e.g.~\cite{FK}. Duality techniques related to this field were further polished in~\cite{GPS} and~\cite{kostlivec}.\\

In connection with the problem in hand, we refine certain ideas from~\cite{GP} in the spirit of~\cite[Theorem~3.8]{copson} in order to fit the specific needs of iterated Copson operators.
In result, we will provide a~full characterization of weights and parameters for which the inequality~\eqref{E:main} holds and give two-sided estimates for its optimal constant.\\

The inequality~\eqref{E:main} was also treated in~\cite{GM,Mus} by a~completely different approach. The criteria obtained in those papers are of different nature than those established here. In~\cite{PS1} iterated operators involving kernels are studied by different methods. The iteration of Copson and Hardy operators constitutes a related but in fact an essentially different problem. Inequalities involving such operators have been investigated for instance in~\cite{GMP1,GMP2,Pro,PS2}.\\

We are about to state the results. Let us first briefly introduce the necessary notation. By $\MM$ we denote the cone of all non-negative measurable functions on $(0,\infty)$. Should a~different underlying interval $(a,b)$ be needed, then we would write $\MM(a,b)$. If $q\in (0,1)\cup(1,\infty)$, then the symbol $q'$ denotes the conjugate exponent $q'=\frac{q}{q-1}$. Notice that $q'$ is negative if $q<1$. The same notation is used for $p$ and $p'$. The notation $A\lesssim B$ means that there exists a~constant $c\in(0,\infty)$ ``independent of relevant quantities in $A$ and $B$'' and such that $A\le cB$. Precisely, such $c$ may depend only on the exponents $m$, $p$, $q$. We write $A\approx B$ if both $A\lesssim B$ and $B\lesssim A$ are true.
Throughout the paper, the conventions
\[
0\cdot\infty=0, \qquad \frac{0}{0}=0, \qquad a^0 = 1 \text{ for all } a\in[0,\infty]
\]
are in effect to avoid undefined expressions.\\

By a~\textit{weight} we mean a~measurable non-negative function on $(0,\infty)$. Let $u$ and $w$ be weights such that the expression $\vp(t)$, defined for any $t\in[0,\infty]$ by
\[
\vp(t)=
\left(\int_0^{t}\left(\int_s^t u(y) \dy\right)^{\frac{q}{m}}w(s)\ds\right)^{\frac{1}{q}},
\]
satisfies $0<\vp(t)<\infty$ for every $t\in(0,\infty)$. In this case, we say that the pair $(u,w)$ is~\textit{admissible} with respect to $(m,q)$. We note that $\vp$ is a~nondecreasing continuous function on $(0,\infty)$ satisfying $\lim_{t\to0_+}\vp(t)=0$, and that the derivative $\vp'$ exists a.e.~on $(0,\infty)$.\\

Now we state our main result. For convenience it is split into two theorems, a~proposition, and a~corollary.

\begin{theorem}\label{T:main}
Let $p\in(1,\infty)$ and $q,m\in(0,\infty)$. Let $u,w,v$ be weights such that the pair $(u,w)$ is admissible with respect to $(m,q)$. Let
	\begin{equation}\label{E:C}
C=\sup_{h\in\M_+}
\frac
{\left(\int_0^{\infty}\left(\int_t^\infty \left(\int_s^{\infty}h(y)\dy\right)^mu(s) \ds\right)^{\frac{q}{m}}w(t)\dt\right)^{\frac{1}{q}}}
{\left(\int_0^{\infty}h(t)^pv(t)\dt\right)^{\frac{1}{p}}}.
	\end{equation}

\textup{(a)} Let $1<p\leq m<\infty$ and $p\leq q<\infty$. Then $C\approx A_1$, where
\[
A_1=
\sup_{t\in(0,\infty)}\left(\int_0^tw(s)\left(\int_s^t u(y)\dy\right)^{\frac qm}\ds\right)^{\frac1q}
\left(\int_t^{\infty}v(s)^{1-p'}\ds\right)^{\frac1{p'}}.
\]

\textup{(b)} Let $1<p\leq m<\infty$ and $0<q<p$. Define $r= \frac{pq}{p-q}$. Then $C\approx A_2+A_3$, where
\[
A_2=
\left(\int_0^{\infty}\hspace{-5pt}
\left(\int_0^t w(s)\ds\right)^{\frac rp} \hspace{-2pt} w(t) \hspace{-2pt}
\sup_{z\in(t,\infty)} \hspace{-2pt}
\left(\int_t^z \hspace{-2pt}u(y)\dy\right)^{\hspace{-2pt}\frac rm}\hspace{-5pt}
\left(\int_z^{\infty}\hspace{-5pt}v(y)^{1-p'}\dy\right)^{\frac r{p'}}\hspace{-5pt}\dt\right)^{\frac 1r}
\]
and
\begin{align*}
A_3=&
\left(\int_0^{\infty}
\left(\int_0^tw(s)\left(\int_s^t u(y)\dy\right)^{\frac qm}\ds\right)^{\frac rp}
 w(t) \right.\\
&\qquad \times \left.\sup_{z\in(t,\infty)}
\left(\int_t^z u(y)\dy\right)^{\frac qm}
\left(\int_z^{\infty}v(y)^{1-p'}\dy\right)^{\frac r{p'}}\dt\right)^{\frac 1r}.
\end{align*}

\textup{(c)} Let $0<m<p<\infty$ and $1<p\leq q$. Then $C\approx A_1+A_4\approx A_1+A_4^*$, where
\begin{align*}
A_4&=
\sup_{t\in(0,\infty)}\left(\int_0^tw(s)\ds\right)^{\frac1q} \\
& \qquad \qquad \times\left(\int_t^{\infty}\hspace{-5pt}\left(\int_t^s u(y)\dy\right)^{\!\frac p{p-m}}\hspace{-5pt}\left(\int_{s}^{\infty}\hspace{-5pt}v(\tau)^{1-p'}\dtau\right)^{\hspace{-5pt}\frac{p(m-1)}{p-m}}\hspace{-8pt}v(s)^{1-p'}\!\ds\right)^{\!\!\frac{p-m}{pm}}
\end{align*}
and
\begin{align*}
A_4^*&=
\sup_{t\in(0,\infty)}\left(\int_0^tw(s)\ds\right)^{\frac1q} \\
&\qquad\qquad \times\left(\int_t^{\infty}\hspace{-5pt}\left(\int_t^s u(y)\dy\right)^{\hspace{-2pt}\frac m{p-m}}u(s)\left(\int_{s}^{\infty}\hspace{-5pt}v(\tau)^{1-p'}\dtau\right)^{\hspace{-5pt}\frac{m(p-1)}{p-m}}\ds\right)^{\hspace{-5pt}\frac{p-m}{pm}}\hspace{-5pt}.
\end{align*}
If, moreover, $m\geq1$ or $\int_{t}^{\infty}v(s)^{1-p'}\ds<\infty$ for every $t\in(0,\infty)$, then $A_4\approx A_4^*$.

\textup{(d)} Let $0<m<p<\infty$, $1<p$ and $0<q<p<\infty$. Define $r= \frac{pq}{p-q}$. Then $C\approx A_3+A_5\approx A_3+A_5^*$, where
\begin{align*}
A_5&=
\left(\vphantom{\Bigg)^\frac12} \int_0^{\infty}
\left(\int_0^tw(s)\ds\right)^{\frac rp}w(t) \right.\\
&\quad\times\left.
\left(\int_t^{\infty}\hspace{-5pt}\left(\int_t^s \hspace{-2pt} u(y)\dy\right)^{\hspace{-2pt}\frac p{p-m}}\hspace{-5pt}\left(\int_{s}^{\infty}\hspace{-5pt}v(\tau)^{1-p'}\dtau\right)^{\hspace{-5pt}\frac{p(m-1)}{p-m}}\hspace{-8pt}v(s)^{1-p'}\ds\right)^{\hspace{-5pt}\frac{q(p-m)}{m(p-q)}}\hspace{-7pt}\dt
\right)^{\frac 1r}
\end{align*}
and
\begin{align*}
A_5^*&=
\left(\vphantom{\Bigg)^\frac12}\int_0^{\infty}
\left(\int_0^tw(s)\ds\right)^{\frac rp}w(t) \right.\\
&\quad\times\left.
\left(\int_t^{\infty}\hspace{-5pt}\left(\int_t^s \hspace{-2pt}u(y)\dy\right)^{\!\!\frac m{p-m}} \hspace{-5pt} u(s)\left(\int_{s}^{\infty} \hspace{-5pt} v(\tau)^{1-p'}\dtau\right)^{\hspace{-5pt}\frac{m(p-1)}{p-m}}\hspace{-5pt}\ds\right)^{\frac{q(p-m)}{m(p-q)}}\hspace{-5pt}\dt
\right)^{\frac 1r}.
\end{align*}
If, moreover, $m\geq1$ and $q>1$ or $\int_{t}^{\infty}v(s)^{1-p'}\ds<\infty$ for every $t\in(0,\infty)$, then $A_5\approx A_5^*$.
\end{theorem}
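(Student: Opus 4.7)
The plan is to combine the discretization/anti-discretization machinery developed in~\cite{GP} with a duality argument on the cone of nonincreasing functions, and to run this program separately for the four sub-ranges of $(p,q,m)$. Since the function $H(s):=\int_s^\infty h(y)\dy$ is nonincreasing and absolutely continuous with $-H'=h$ a.e.~on $(0,\infty)$, the left-hand side of~\eqref{E:main} depends on $H$ alone while the right-hand side is exactly $\|H'\|_{L^p(v)}$. Hence the best constant $C$ is the norm of the sublinear functional
\[
H\mapsto \lt \int_0^\infty \lt \int_t^\infty H(s)^m u(s)\ds\rt^{\frac qm} w(t)\dt\rt^{\frac1q}
\]
on the cone of such $H$, normalised by $\|H'\|_{L^p(v)}$; the admissibility function $\vp$ provides the natural scale against which this norm is measured.

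First I would construct a discretizing sequence $\{t_k\}_{k\in\Z}\subset(0,\infty)$ satisfying $\vp(t_{k+1})=2\vp(t_k)$. Continuity of $\vp$, which follows from admissibility, guarantees such a sequence and partitions $(0,\infty)$ into intervals on which the inner iterated integral oscillates by a controlled factor. Using $H(s)^m = m\int_s^\infty H(\tau)^{m-1}(-H'(\tau))\dtau$ together with Fubini one obtains
\[
\int_t^\infty H(s)^m u(s)\ds = m\int_t^\infty \lt\int_t^\tau u(s)\ds\rt H(\tau)^{m-1}h(\tau)\dtau,
\]
which isolates $h$ and rewrites the left-hand side as an operator acting directly on $h$ through the weight $\tau\mapsto H(\tau)^{m-1}\int_t^\tau u(s)\ds$. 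Plugging in test functions $h=\chi_{(t_k,t_{k+1})}$, or localised versions thereof, produces discrete necessary conditions expressed in the dyadic parameters $\vp(t_k)$.

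The sufficiency direction proceeds by an inner step followed by an outer discrete Hardy step. When $p\le m$, the inner $m$-th power can be absorbed by Jensen's (or Minkowski's) inequality and the problem reduces to a single weighted Hardy--Copson inequality, yielding only the condition $A_1$ when $p\le q$ and the pair $A_2,A_3$ when $q<p$ via the standard discrete Hardy inequality with exponent $r=pq/(p-q)$. When $m<p$, the inner step becomes a genuine weighted Copson inequality in its own right and generates the additional terms $A_4,A_4^*$ when $p\le q$ and $A_5,A_5^*$ when $q<p$. The anti-discretization step, patterned on~\cite[Theorem~3.8]{copson}, then turns each discrete condition into the stated continuous one and checks that the discrete necessary and sufficient bounds match up to constants depending only on $p,q,m$.

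Finally, I would prove $A_4\approx A_4^*$ and $A_5\approx A_5^*$ under the additional hypotheses by integration by parts with respect to $u(s)\ds$: the pair $\frac{p}{p-m}$ versus $\frac{m}{p-m}$ together with $\frac{p(m-1)}{p-m}$ versus $\frac{m(p-1)}{p-m}$ is exactly the Hardy conjugate pair arising from shifting one power of $\int_t^s u(y)\dy$ across the integral sign, and the assumption $m\ge 1$ (with $q>1$ in part (d)) or $\int_t^\infty v^{1-p'}<\infty$ ensures that no boundary term survives at $s\to\infty$. The main obstacle will be the discretization step itself for the iterated Copson operator: unlike for a single Hardy or Copson integral, the three nested integrals prevent a direct reduction to known one-weight discrete inequalities, and the interaction of $u$ with $w$ has to be captured simultaneously through $\vp$. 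This is precisely where the new discretization principle announced in the abstract must be brought in, and where the technical heart of the proof lies.
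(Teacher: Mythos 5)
Your outline captures the broad architecture (discretize, estimate locally, anti-discretize, split by $p\lessgtr m$ and $p\lessgtr q$, integrate by parts for $A_4\approx A_4^*$ and $A_5\approx A_5^*$) and correctly identifies that the discretization for the iterated Copson operator is the hard part. However, there are three concrete gaps between the proposal and a working proof.

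First, the discretization you sketch is too coarse. The paper does \emph{not} use a sequence with $\vp(t_{k+1})=2\vp(t_k)$; it imports from~\cite[Theorem~3.8]{copson} a sequence built with the geometric factor $2^{q/m+1}$, split into two index sets $\K_1,\K_2$ according to whether the jump is achieved on $\int_0^{t}w$ or on $\vp^q$, together with the pointwise control \eqref{E:copson-9} of $\vp(t)^q$ by three local pieces involving $w$ and $u$ on adjacent intervals $\Delta_{k-3},\Delta_{k-2},\Delta_{k-1}$. The resulting decomposition $\eqref{E:diskr}$ into $\B{1}$ (a ``boundary'' block) and $\B{2}$ (a block involving $\vp^{m-1}\vp'$) is the actual engine of the whole proof; your Fubini identity $\int_t^\infty H(s)^m u(s)\,ds=m\int_t^\infty(\int_t^\tau u)H(\tau)^{m-1}h(\tau)\,d\tau$ is true and in the right spirit, but it does not by itself deliver $\B{1}+\B{2}$, and you explicitly concede you do not know how to finish this step.

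Second, the claim that when $p\le m$ the inner $m$th power ``can be absorbed by Jensen's (or Minkowski's) inequality'' is not correct as a replacement for what is needed. After discretization, the estimate on $\B{2}$ requires a genuine \emph{two-weight} local Hardy inequality on each $\Delta_{k-1}$, with outer weight $\vp^{m-1}\vp'$ and inner weight $v$, producing the factor $\sup_{t\in\Delta_{k-1}}(\int_{t_{k-1}}^{t}\vp^{m-1}\vp')^{1/m}(\int_t^{t_k}v^{1-p'})^{1/p'}$ — this is Lemma~\ref{L:35}(i) with $\alpha=p$, $\beta=m$, not Jensen/Minkowski, which would not insert the dual weight $v^{1-p'}$. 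Similarly in the range $m<p$ the relevant tool is Lemma~\ref{L:35}(ii), and the resulting exponent $\frac{p}{p-m}$ (or $\frac{m}{p-m}$) in $A_4,A_5$ cannot come from a convexity argument alone.

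Third, on the necessity side, test functions of the form $h=\chi_{(t_k,t_{k+1})}$ are too weak. The paper uses the \emph{saturation} of the local Hölder inequality (producing $g_k$ with $\int_{\Delta_{k-1}}g_k^pv=1$ and $\int_{\Delta_{k-1}}g_k=(\int_{\Delta_{k-1}}v^{1-p'})^{1/p'}$) and of the local Hardy inequality (producing $h_k$, resp.\ $f_k$, attaining the supremum in Lemma~\ref{L:35}(i), resp.\ (ii)); these are then superposed with the extremal sequence from the discrete Hölder inequality (Lemma~\ref{L:40}) to build the single test function that bounds the discrete condition $D_2$ or $D_4$ from below. Plain characteristic functions do not see the weight $v$ in the right way and cannot recover the full conditions $A_2+A_3$, $A_4$, or $A_5$. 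Your final paragraph on $A_4\approx A_4^*$ and $A_5\approx A_5^*$ via integration by parts (with $\int_t^\infty v^{1-p'}<\infty$ killing the boundary term) is correct and matches the paper.
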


In the case (d) of the previous theorem, a~simpler alternative equivalent condition is available if we are willing to settle for the restriction $m\leq q$. Since this is often the case in applications, this fact is worth pointing out.

\begin{proposition}\label{T:dodatek}
Let $0<m<p<\infty$, $1<p$ and $0<q<p<\infty$. Define $r= \frac{pq}{p-q}$. If $q=1$, define
$\frac r{q'}=0$. Let $u,w,v$ be weights such that the pair $(u,w)$ is admissible with respect to $(m,q)$. Let $C$, $A_1$, $A_5$ be defined as in Theorem \ref{T:main}, and let
\[
A_6=
\left(\int_0^{\infty}\hspace{-5pt}
\left(\int_0^tw(s)
\left(\int_s^{t}u(y)\dy\right)^{\hspace{-2pt}\frac q{m}}\hspace{-5pt}\ds\right)^{\hspace{-2pt}\frac rq} \hspace{-5pt}
\left(\int_t^{\infty}v(y)^{1-p'}\dy\right)^{\!\frac{r}{q'}}\hspace{-5pt}v(t)^{1-p'}\dt\right)^{\!\frac{1}{r}}.
\]
Then $C \lesssim A_1+A_5+A_6$ hold true. However, the estimate $A_6 \lesssim C$ is valid if and only if $m\le q$.
\end{proposition}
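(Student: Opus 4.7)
The proposition comprises two claims: the unconditional upper bound $C\lesssim A_1+A_5+A_6$, and the equivalence $A_6\lesssim C\Leftrightarrow m\le q$.

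For the upper bound, the plan is to invoke Theorem~\ref{T:main}(d), which gives $C\approx A_3+A_5$ in the present parameter range, so it suffices to prove the weight-functional estimate $A_3\lesssim A_1+A_6$. Setting $U(t,z)=\int_t^z u(y)\dy$ and $V(z)=\int_z^{\infty}v(y)^{1-p'}\dy$, the difficulty lies in the supremum $\sup_{z>t}U(t,z)^{q/m}V(z)^{r/p'}$ appearing in the integrand of $A_3^r$. Since this product vanishes at $z=t$ and, in the non-degenerate case, at $z=\infty$, the derivative identity
\[
\frac{d}{dz}\big[U(t,z)^{q/m}V(z)^{r/p'}\big]=\frac{q}{m}u(z)U^{q/m-1}V^{r/p'}-\frac{r}{p'}v(z)^{1-p'}U^{q/m}V^{r/p'-1}
\]
combined with integration from $z$ to $\infty$ yields the pointwise bound
\[
\sup_{z>t}U(t,z)^{q/m}V(z)^{r/p'}\lesssim \int_t^{\infty}U(t,y)^{q/m}V(y)^{r/p'-1}v(y)^{1-p'}\dy,
\]
with any residual boundary contribution $U(t,\infty)^{q/m}V(\infty)^{r/p'}$ absorbed into $A_1$-type quantities in the degenerate case. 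Substituting this bound into $A_3^r$, applying Fubini in $(t,y)$, and using the elementary estimate $\int_0^y\varphi(t)^{rq/p}w(t)U(t,y)^{q/m}\dt\le \varphi(y)^{rq/p}\varphi(y)^q=\varphi(y)^r$ (where the identity $rq/p+q=r$ follows from $r/p+1=r/q$) together with the exponent identity $r/p'-1=r/q'$, gives exactly $A_6^r$ on the right.

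For $A_6\lesssim C$ under $m\le q$, I plan to test the supremum defining $C$ in~\eqref{E:C} with functions adapted to the integrand of $A_6$. A natural choice is a discretized test $h=\sum_k c_k v^{1-p'}\chi_{(t_k,t_{k+1})}$, where $\{t_k\}$ is a level-set discretization $\varphi(t_k)=2^k$ in the spirit of the paper's approach, and the coefficients $c_k$ are tuned to optimize the test ratio. The hypothesis $q/m\ge 1$ is essential here, as it lets Minkowski's integral inequality (applied to the middle $L^{q/m}$-integration in $t$) produce a lower bound of $A_6$-form with the correct exponent $r/q'$ on $V$ and exponent $r$ on $\varphi$. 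The necessity of $m\le q$ is then established by exhibiting an explicit family of power-type weights on a bounded interval where, for $m>q$, one keeps $A_3+A_5$ (and hence $C$, by Theorem~\ref{T:main}(d)) bounded while $A_6$ diverges.

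The main obstacle is the direction $A_6\lesssim C$ under $m\le q$: aligning the test function with the three weights and the exponents $r$, $r/q$, $r/q'$ so that the test ratio converges to $A_6$ (rather than a strictly smaller quantity) requires a delicate dyadic argument in which the hypothesis $q/m\ge 1$ is used to pass from the inner $L^m$-integration to the outer $L^q$-integration without loss.
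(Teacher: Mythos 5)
The part $C\lesssim A_1+A_5+A_6$ is sound and matches the paper's route: both you and the authors invoke Theorem~\ref{T:main}(d) to reduce to $A_3\lesssim A_6$, and both replace the supremum in $A_3^r$ by an integral coming from differentiating the product $U(t,z)^{q/m}V(z)^{r/p'}$ in $z$, then finish with Fubini, monotonicity of $\vp$, and the exponent identities $rq/p+q=r$ and $r/p'-1=r/q'$. (The paper keeps the derivative falling on $U$ and does a second integration by parts later; you put the derivative on $V$ directly. These are two routes through the same calculation.) You should, however, also record the small observation $A_1\lesssim A_6$ under $\int_t^\infty v^{1-p'}<\infty$, since your bound $A_3\lesssim A_1+A_6$ needs no further simplification otherwise.

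Where your proposal diverges substantially is the direction $A_6\lesssim C$ for $m\le q$. The paper handles $q>m$ by an $L^{q/m}$--duality argument: since $q/m>1$, it dualizes the middle integration to reduce $C$ to a sup over auxiliary weights $g$, applies Hardy's inequality (Lemma~\ref{L:35}(ii)) to the innermost operator, and then invokes a known characterization of a Hardy operator with Oinarov kernel (\cite[Theorem 1.2]{Oi}); the case $q=m$ is done directly from Hardy's inequality. Your proposed discretized test $h=\sum_k c_k v^{1-p'}\chi_{(t_k,t_{k+1})}$ with $\vp(t_k)=2^k$ is a genuinely different strategy, but as written it has a real gap: the claim that Minkowski's inequality, applied to the $L^{q/m}$-integration in $t$ with $q/m\ge1$, ``produces a lower bound of $A_6$-form'' is unclear. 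Minkowski gives $\|\int f(s,\cdot)\ds\|_{L^{q/m}}\le\int\|f(s,\cdot)\|_{L^{q/m}}\ds$, which goes the wrong way for lower-bounding the left-hand side of \eqref{E:main}; if what you intend is the norm-interchange inequality $\|\cdot\|_{L^q_tL^m_s}\ge\|\cdot\|_{L^m_sL^q_t}$ for $q\ge m$, you would need to show that the resulting quantity is actually comparable to $A_6$ and not just a weaker functional, and you would also need to tune the $c_k$ against the correct weight exponents; none of this is shown. Finally, your counterexample sketch (``power-type weights on a bounded interval'' making $A_3+A_5$ bounded while $A_6$ diverges) has the right logical shape, but the actual construction in the paper is a sequence $w_n=n\chi_{[0,1/n]}$ with a fixed $u\equiv1$ and a $v$ involving a logarithmic correction, followed by a monotone convergence argument sending $A_{6,n}\to\infty$ while the $C_n$ stay uniformly bounded; the logarithmic factor is not cosmetic, and an unqualified power-weight example on a bounded interval would need careful checking to ensure $A_6$ diverges while $C$ stays finite.
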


The results for $p=1$ have a~different form, so, for the readers' convenience, they are singled out.

\begin{theorem}\label{T:main-p=1}
Let $q,m\in(0,\infty)$. Let $u,w,v$ be weights such that the pair $(u,w)$ is admissible with respect to $(m,q)$. Let
\[
C=\sup_{h\in\M_+}
\frac
{\left(\int_0^{\infty}\left(\int_t^\infty \left(\int_s^{\infty}h(y)\dy\right)^mu(s) \ds\right)^{\frac{q}{m}}w(t)\dt\right)^{\frac{1}{q}}}
{\int_0^{\infty}h(t)v(t)\dt}.
\]

\textup{(a)} Let $1\leq m<\infty$ and $1\leq q<\infty$. Then $C\approx \widetilde A_1$, where
\[
\widetilde A_1=
\esssup_{t\in(0,\infty)}\left(\int_0^tw(s)\left(\int_s^t u(y)\dy\right)^{\frac qm}\ds\right)^{\frac1q}
\frac 1{v(t)}.
\]

\textup{(b)} Let $1\leq m<\infty$ and $0<q<1$. Then $C\approx \widetilde A_2+\widetilde A_3$, where
\[
\widetilde A_2=
\left(\int_0^{\infty}
\left(\int_0^tw(s)\ds\right)^{-q'} w(t)
\esssup_{z\in(t,\infty)}
\left(\int_t^z u(y)\dy\right)^{-\frac{q'}m}
v(z)^{q'}\dt\right)^{-\frac 1{q'}}
\]
and
\[
\widetilde A_3=\hspace{-4pt}
\left(\int_0^{\infty}\hspace{-5pt}
\left(\int_0^t \!w(s)\!\left(\int_s^t \!u(y)\dy\!\right)^{\hspace{-2pt}\frac qm}\hspace{-7pt}\ds\!\right)^{\hspace{-5pt}-q'}
 \hspace{-8pt}w(t)
\esssup_{z\in(t,\infty)}\!
\left(\int_t^z \hspace{-4pt} u(y)\dy\!\right)^{\!\frac qm}
\hspace{-5pt}v(z)^{q'}\dt\!\right)^{\hspace{-6pt}-\frac 1{q'}}\hspace{-5pt}.
\]

\textup{(c)} Let $0<m<1$ and $1\leq q$. Then $C\approx \widetilde A_1+\widetilde A_4$, where
\[
\widetilde A_4=\!\!
\sup_{t\in(0,\infty)}\hspace{-5pt}\left(\int_0^t \! w(s)\ds\! \right)^{\! \frac1q}\!\!
\left(\int_t^{\infty}\hspace{-5pt}\left(\int_t^z \! u(y)\dy\right)^{\!\frac m{1-m}} \hspace{-6pt} u(z)\esssup_{y\in(z,\infty)} v(y)^{-\frac{m}{1-m}}\dz\right)^{\!\!\frac{1-m}{m}}\hspace{-6pt}.
\]

\textup{(d)} Let $0<m<1$, $1<p$ and $0<q<1$. Then $C\approx \widetilde A_3+\widetilde A_5$, where
\begin{align*}
\widetilde A_5 &=
\left( \vphantom{\Bigg(^\frac12} \int_0^{\infty}
\left(\int_0^tw(s)\ds\right)^{-q'}w(t) \right. \\
& \qquad\times\left.
\left(\int_t^{\infty}\left(\int_t^z u(y)\dy\right)^{\frac m{1-m}} \hspace{-5pt} u(z)\esssup_{y\in(z,\infty)}v(y)^{-\frac{m}{1-m}}\dz\right)^{\hspace{-4pt}-\frac{q'(1-m)}{m}}\hspace{-8pt}\dt
\right)^{\hspace{-4pt}-\frac 1{q'}}\hspace{-6pt}.
\end{align*}
\end{theorem}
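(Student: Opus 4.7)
The plan is to follow the proof of Theorem~\ref{T:main} very closely, replacing its every use of H\"older's inequality with $\frac1p+\frac1{p'}=1$ by the $L^1$--$L^\infty$ duality
\[
\int_0^\infty f(y) g(y)\dy \le \left(\int_0^\infty f(y) v(y)\dy\right) \esssup_{y\in(0,\infty)} \frac{g(y)}{v(y)}.
\]
This single substitution accounts for the form of all four conditions: wherever the proof of Theorem~\ref{T:main} produces the quantity $\left(\int v^{1-p'}\right)^{1/p'}$, the corresponding step here produces an $\esssup$ of a power of $1/v$, formally corresponding to $p'=\infty$. The four cases~(a)--(d) correspond precisely to the four regimes of Theorem~\ref{T:main} under $p=1$: $m\ge 1$ replaces $p\le m$, and $q\ge 1$ replaces $p\le q$.

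First, I would reuse the discretization scheme based on the function $\vp$ from the proof of Theorem~\ref{T:main}: choose a sequence $\{t_k\}\subset(0,\infty)$ with $\vp(t_k)\approx 2^k$, so that the outer integral splits into blocks naturally adapted to the admissible pair $(u,w)$. On each block, the triple integral on the left of the target inequality is equivalent, up to absolute constants, to a geometric combination of localized iterated quantities. The estimates for these localized quantities are structurally identical to those in the $p>1$ case, up to the last application of a H\"older-type inequality, where we substitute the $L^1$--$L^\infty$ duality stated above.

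For the necessity direction ($\widetilde A_i\lesssim C$), I would construct test functions $h\ge 0$ normalized so that $\int_0^\infty hv\,\dt \approx 1$. To extract the pointwise supremum conditions $\widetilde A_1$ and $\widetilde A_4$, take $h$ concentrated in a small neighborhood of a point where $1/v$ is almost maximal, then shrink the neighborhood to recover the essential supremum. To extract the integral conditions $\widetilde A_2,\widetilde A_3,\widetilde A_5$, combine such localized functions along the discretization sequence $\{t_k\}$ with geometric coefficients matched to the exponent $-q'$ (which is positive because $q<1$), following exactly the pattern of cases~(b) and~(d) of the proof of Theorem~\ref{T:main}.

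For the sufficiency direction ($C\lesssim \widetilde A_i$), apply the $L^1$--$L^\infty$ duality to the innermost integral after the usual discrete decomposition of the left-hand side. In cases~(c) and~(d) with $0<m<1$, the auxiliary splitting of the $L^m$-type integral $\int_t^\infty(\cdot)^m u\,\ds$ that appears in the proof of parts~(c) and~(d) of Theorem~\ref{T:main} is still needed, to handle the sub-additivity of $x\mapsto x^m$. The hard part is case~(d), where both $m<1$ and $q<1$: the two layers of sub-additivity combined with the $L^1$--$L^\infty$ duality at the base demand that the test sequence used for the lower bound captures the nested $\esssup_{y\in(z,\infty)}v(y)^{-m/(1-m)}$ inside an outer $\dt$-integral. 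This will require selecting the concentration points of the test functions in a way that is compatible with the partition $\{t_k\}$ at both scales simultaneously, and then verifying by anti-discretization that the resulting discrete lower bound actually reproduces the continuous expression $\widetilde A_5$.
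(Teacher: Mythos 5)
Your proposal takes essentially the same approach as the paper; indeed, the paper itself omits the proof of Theorem~\ref{T:main-p=1}, stating only that it ``needs only rather obvious changes compared to that of Theorem~\ref{T:main}.'' You have correctly identified those changes: replace the $L^p$--$L^{p'}$ H\"older duality by $L^1$--$L^\infty$ duality (turning each factor $\bigl(\int v^{1-p'}\bigr)^{1/p'}$ into $\esssup 1/v$, with the attendant $p=1$ variants of Lemma~\ref{L:35}), keep the $\vp$-based discretizing sequence and the block-by-block estimates of $\B{1}$ and $\B{2}$, and saturate the $L^1$--$L^\infty$ duality by test functions concentrated near near-maximizing points of $1/v$ rather than by the explicit extremizers $g_k$ used when $p>1$. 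Two minor quibbles: the discretizing sequence is chosen by the more delicate balance in~\eqref{E:copson-5}--\eqref{E:copson-8} (the sets $\K_1,\K_2$) rather than by the single condition $\vp(t_k)\approx 2^k$; and the $0<m<1$ splitting in cases (c) and (d) is driven by Lemma~\ref{L:35}(ii) rather than by subadditivity of $x\mapsto x^m$ per se. Neither affects the validity of your outline.
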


Finally, as a~by-product in the proof of Theorem \ref{T:main}, one obtains a~characterization of the quantity~$C$ in \eqref{E:C} by discrete conditions. Typically, conditions characterizing weighted inequalities may be expressed in different but mutually equivalent ways. Finding the equivalent conditions and further dealing with them is in general considerably easier if a~discrete representation is at one's disposal. For this reason, we list the discrete conditions in the following corollary. (We omit the case $p=1$.)\\

However, before stating the result in the discrete form, we need to summarize at least the main elements of the discretization method which are going to be needed.  Let $(u,v)$ be an~admissible pair of weights with respect to $(m,q)$. Define $K=0$ if $\vp(\infty)<\infty$, and $K=\infty$ if $\vp(\infty)=\infty$. We denote $\K=\{k\in\Z\cup\{\infty\},\ k\le K\}$, and for any $k\in\K\setminus\{K\}$ we define $\Delta_k=[t_k,t_{k+1}]$. Then there exist sets $\mathbb K_1$ and $\mathbb K_2$ such that $\K_1\cap \K_2=\varnothing$, $\K=\K_1\cup \K_2$, and a sequence $\{t_k\}_{k\in\K}$ having the following properties:
	\[
		0<t_{k-1}\leq t_k<\infty \text{ for every } k\in\K\setminus\{K\}; \qquad t_K=\infty;
	\]
\begin{equation}\label{E:copson-5}
	\frac{2^{\frac{q}{m}+1}}{2^{\frac{q}{m}+1}\!-\!1} \int_{\Delta_{k-1}} \hspace{-8pt} w(t)\dt \ge \! \int_0^{t_k} \! w(t)\dt\geq 2^{\frac{q}{m}+1} \! \int_0^{t_{k-1}} \!\!\! w(t)\dt \quad \text{for all } k\in\K;
\end{equation}
\begin{equation}\label{E:copson-6}
\vp(t_k)^q\geq 2^{\frac{q}{m}+1}\vp(t_{k-1})^q \quad \text{for all } k\in\K;
\end{equation}
\begin{equation}\label{E:copson-7}
\int_0^{t_k}w(t)\dt=2^{\frac{q}{m}+1}\int_0^{t_{k-1}}w(t)\dt \quad \text{for all } k\in\K_1;
\end{equation}
\begin{equation}\label{E:copson-8}
\vp(t_k)^q=2^{\frac{q}{m}+1}\vp(t_{k-1})^q \quad \text{for all } k\in\K_2;
\end{equation}
and
\begin{align}
	\vp^q(t) & \lesssim \int_{\Delta_{k-3}} \hspace{-8pt} w(s)\ds \lt \int_{\Delta_{k-2}} \hspace{-8pt} u(y)\dy \rt^\frac qm \hspace{-8pt} + \int_{\Delta_{k-2}} \hspace{-8pt} w(s) \lt \int_s^{t_{k-1}} \hspace{-8pt} u(y) \dy \rt^\frac qm \ds \label{E:copson-9}\\
	& \qquad+ \int_{\Delta_{k-2}} \hspace{-8pt} w(s)\ds \lt \int_{t_{k-1}}^t \hspace{-8pt} u(y)\dy \rt^\frac qm \ \text{for all } k\in\K \text{ and } t\in \Delta_{k-1}.
\nonumber
\end{align}
The sequence $\{t_k\}_{k\in\K}$ is called the \textit{discretizing sequence}. We note that the admissibility of the pair $(u,w)$ guarantees that $\lim_{k\to-\infty}t_k=0$, and $\lim_{k\to \infty}t_k=\infty$ if $K=\infty$. As a consequence, $\int_0^{\infty} f(t)\dt=\sum_{k\in\K}\int_{\Delta_{k-1}}f(t)\dt$ holds whenever $f$ is a~non-negative measurable function on $(0,\infty)$.

A general reference for all that has been said in the last paragraph is~\cite[Theorems 3.4, 3.7]{copson}, see also~\cite{KGGE,CPSS,GP2} and more. Now we can state the promised corollary.

\begin{corollary}\label{Cor}
	Let $p\in(1,\infty)$ and $q,m\in(0,\infty)$. Let $u,w,v$ be weights such that the pair $(u,w)$ is admissible with respect to $(m,q)$, and let $\{t_k\}_{k\in\K}$ be the corresponding discretizing sequence. Let $C$ be given by \eqref{E:C}.
	
	\textup{(a)} Let $1<p\leq m<\infty$ and $p\leq q<\infty$. Then $C\approx D_1$, where
		\[
		D_1=\sup_{k\in\K}\sup_{t\in\Delta_{k-1}}
		\vp(t)\left(\int_t^{t_k}v(s)^{1-p'}\ds\right)^{\frac 1{p'}}.
		\]
		
	\textup{(b)} Let $1<p\leq m<\infty$ and $0<q<p$. Define $r= \frac{pq}{p-q}$. Then $C\approx D_2$, where
		\[
		D_2 = \left( \sum_{k\in\K} \sup_{t\in\Delta_{k-1}} \vp(t)^r \left(\int_{t}^{t_k}v(s)^{1-p'}\ds\right)^{\frac{r}{p'}}\right)^{\frac{1}{r}}.
		\]
			
	\textup{(c)} Let $0<m<p<\infty$ and $1<p\leq q$. Then $C\approx D_1+D_3$, where
		\[
			D_3
			=
			\sup_{k\in\K}
			\left(\int_{\Delta_{k-1}}
			\vp(t)^{\frac{mp}{p-m}}
			\left(\int_{t}^{t_k}v(s)^{1-p'}\ds\right)^{\frac{p(m-1)}{p-m}}v(t)^{1-p'}\dt\right)^{\frac{p-m}{{mp}}}.
		\]
	If, moreover, $m\geq1$ or $\int_{t}^{\infty}v(s)^{1-p'}\ds<\infty$ for every $t\in(0,\infty)$, then $D_1 \lesssim D_3$ and thus $C\approx D_3$.\\
	
	\textup{(d)} Let $0<m<p<\infty$, $1<p$ and $0<q<p<\infty$. Define $r= \frac{pq}{p-q}$. Then $C\approx D_1 + D_4$, where
		 \[
			 D_4=
			 \left(
			 \sum_{k\in\K}\left(\int_{\Delta_{k-1}}
			 \vp(t)^{\frac{mp}{p-m}}
			 \left(\int_{t}^{t_k}v(s)^{1-p'}\ds\right)^{\frac{p(m-1)}{p-m}}
			 v(t)^{1-p'}\dt\right)^{\frac{q(p-m)}{m(p-q)}}
			 \right)^{\frac{1}{r}}.
		 \]
	If, moreover, $m\ge 1$ or $\int_{t}^{\infty}v(s)^{1-p'}\ds<\infty$ for every $t\in(0,\infty)$, then $D_1 \lesssim D_4$ and thus $C\approx D_4$.
\end{corollary}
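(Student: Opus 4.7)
The plan is to extract the discrete conditions $D_i$ as intermediate quantities from the proof of Theorem~\ref{T:main}. That proof may be organized into two conceptual stages: first, reducing the continuous expression $C$ to an equivalent discrete functional built on the sequence $\{t_k\}_{k\in\K}$; second, anti-discretizing that functional into the continuous conditions $A_i$. The quantities $D_1$--$D_4$ are precisely the output of the first stage, so proving the corollary amounts to verifying both halves of the equivalence $C\approx D_i$ (or $C\approx D_1+D_i$ in cases~(c),~(d)) without invoking the anti-discretization step.

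For the lower bounds $D_i\lesssim C$, the strategy is to test the supremum defining $C$ with localized trial functions. For case~(a), given $k\in\K$ and $t\in\Delta_{k-1}$, I would plug in $h=v^{1-p'}\chi_{(t,t_k)}$: the inner integral $\int_s^\infty h$ is bounded below by $\int_t^{t_k}v(y)^{1-p'}\dy$ for all $s\le t$, whence the $s$-integral dominates $\bigl(\int_t^{t_k}v^{1-p'}\bigr)^m\int_{t'}^t u(s)\ds$ for $t'\le t$, and outer integration over $t'\in(0,t)$ produces the factor $\vp(t)^q$. Rearranging and taking the supremum in $t,k$ gives $D_1\lesssim C$. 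For~(b) one uses a sum of such localized test functions with coefficients tuned to build the discrete $r$-norm; for~(c) and~(d) the additional exponent $p/m>1$ forces an inner H\"older step which produces the Sawyer-type factor $\bigl(\int_t^{t_k}v^{1-p'}\bigr)^{p(m-1)/(p-m)}v(t)^{1-p'}$ inside $D_3$ and $D_4$.

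The upper bounds $C\lesssim D_i$ are obtained by decomposing $\int_0^\infty=\sum_{k\in\K}\int_{\Delta_{k-1}}$ in the outer integral, applying~\eqref{E:copson-9} to split the inner iterated Copson expression on each $\Delta_{k-1}$ into three elementary pieces, and then handling each piece by a local two-weight Hardy inequality (of Muckenhoupt type for $p\le q$ and Maz'ya type for $q<p$) with endpoint weight $v^{1-p'}$. The geometric-growth identities~\eqref{E:copson-5}--\eqref{E:copson-8} then allow one to collapse neighbouring summands, replace $\int_0^{t_k}w$ by $\int_{\Delta_{k-1}}w$ up to constants, and treat $\vp$ as locally constant on each $\Delta_{k-1}$ with value comparable to $\vp(t_k)$. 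The resulting bounds match $D_1$, $D_2$, $D_1+D_3$, $D_1+D_4$ according to the case.

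The main obstacle lies in cases~(c) and~(d), where the $p/m$-Sawyer integral interacts nontrivially with the discretization. Verifying the conditional refinements $D_1\lesssim D_3$ and $D_1\lesssim D_4$ under the extra hypothesis ($m\ge 1$, or $\int_t^\infty v(s)^{1-p'}\ds<\infty$) requires embedding the pointwise quantity $\vp(t)\bigl(\int_t^{t_k}v(s)^{1-p'}\ds\bigr)^{1/p'}$ into the Sawyer integral by integration by parts on $\Delta_{k-1}$, recognizing the weight $\bigl(\int_t^{t_k}v^{1-p'}\bigr)^{p(m-1)/(p-m)}v(t)^{1-p'}\dt$ up to a multiplicative constant as the differential of $\bigl(\int_t^{t_k}v^{1-p'}\bigr)^{m(p-1)/(p-m)}$, and using the local constancy of $\vp$ on $\Delta_{k-1}$ to pull this factor out of the integral. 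The remaining verification then amounts to routine exponent arithmetic involving the identities $r=pq/(p-q)$ and $q(p-m)/(m(p-q))=r(p-m)/(pm)$.
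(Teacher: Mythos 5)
Your overall architecture — extracting the $D_i$ as intermediate quantities between the continuous $C$ and the anti-discretized $A_i$ — is indeed the paper's route: the sufficiency half of the proof of Theorem~\ref{T:main} passes through an estimate of the form $\B{1}+\B{2}\lesssim D_i\,\|h\|_{L^p(v)}$, while the necessity half proves $D_i\lesssim C$ by testing with saturating functions $g_k$, $h_k$ (or $f_k$) assembled with coefficients from the discrete H\"older saturation. Your explicit test function $h=v^{1-p'}\chi_{(t,t_k)}$ for case~(a) is a clean direct way to get $D_1\lesssim C$; the paper in fact omits this particular verification and only remarks that it can be obtained by discretization.

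Two points, however, do not match the mechanism and, as stated, would not go through.

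First, for $C\lesssim D_i$ you invoke~\eqref{E:copson-9} ``to split the inner iterated Copson expression on each $\Delta_{k-1}$ into three elementary pieces''. That is not what \eqref{E:copson-9} does: it is a pointwise estimate for $\vp(t)^q=\int_0^t w(s)\bigl(\int_s^t u\bigr)^{q/m}\ds$, and it says nothing about the functional $\int_t^\infty\bigl(\int_s^\infty h\bigr)^m u\ds$. The correct black box for the sufficiency direction is the discretization formula~\eqref{E:diskr} (from \cite[Theorem~3.8]{copson}), which writes the entire left-hand side as $\B{1}+\B{2}$; the local H\"older step \eqref{E:Hoelder-obyc} and the local Hardy step (Lemma~\ref{L:35}) are then applied to those two discrete sums. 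Formula~\eqref{E:copson-9} enters only in the subsequent anti-discretization $D_i\lesssim A_i$, which is irrelevant for the corollary.

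Second, your argument for $D_1\lesssim D_3$ (and $D_1\lesssim D_4$) relies on the ``local constancy of $\vp$ on $\Delta_{k-1}$ to pull this factor out of the integral''. This is wrong on both counts. The function $\vp$ is \emph{not} locally constant on $\Delta_{k-1}$: the construction enforces $\vp(t_k)^q\ge 2^{q/m+1}\vp(t_{k-1})^q$ by~\eqref{E:copson-6}, and there is no matching upper bound, so $\vp(t_k)/\vp(t_{k-1})$ may be arbitrarily large. Moreover, the inequality you need goes in the direction of \emph{pushing $\vp(t)$ into} the Sawyer integral, not pulling it out. The correct argument uses only that $\vp$ is nondecreasing: since $\tfrac{mp}{p-m}>0$, one has $\vp(t)^{\frac{mp}{p-m}}\le\vp(s)^{\frac{mp}{p-m}}$ for $s\ge t$, whence
\[
\vp(t)^{\frac{mp}{p-m}}\int_t^{t_k}\!\left(\int_s^{t_k}v^{1-p'}\right)^{\!\frac{p(m-1)}{p-m}}\!v(s)^{1-p'}\ds
\le \int_{\Delta_{k-1}}\!\vp(s)^{\frac{mp}{p-m}}\left(\int_s^{t_k}v^{1-p'}\right)^{\!\frac{p(m-1)}{p-m}}\!v(s)^{1-p'}\ds.
\]
Combining this with the fundamental-theorem identity that rewrites $\bigl(\int_t^{t_k}v^{1-p'}\bigr)^{1/p'}$ in the Sawyer form (valid under the extra hypothesis $m\ge 1$ or $\int_t^\infty v^{1-p'}\ds<\infty$) yields $D_1\lesssim D_3$; raising to the power $\frac{q(p-m)}{m(p-q)}$ and summing gives $D_1\lesssim D_4$ likewise.
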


\section{Proofs}

We begin with recalling three known lemmas that will be needed in the proof of the main theorem, for reader's convenience stated in full. For the proofs, see e.g.~\cite{GHS,GP,K-complut}, etc.

\begin{lemma}[discrete H\"older's inequality]\label{L:40}
  Let $k_{\min},k_{\max}\in\Z\cup\{\pm\infty\}$ be such that $k_{\min}<k_{\max}$. Let $\{a_k\}_{k=k_{\min}}^{k_{\max}}$ and $\{b_k\}_{k=k_{\min}}^{k_{\max}}$ be two non-negative sequences. Assume that $0<q<p<\infty$. Then
    \[
      \left(\sum_{k=k_{\min}}^{k_{\max}} a_k^q b_k\right)^\frac1q \le \left(\sum_{k=k_{\min}}^{k_{\max}} a_k^p\right)^\frac1p \left( \sum_{k=k_{\min}}^{k_{\max}} b_k^\frac{p}{p-q}\right)^\frac{p-q}{pq}.
    \]
  Moreover, there exists a~non-negative sequence $\{c_k\}_{k=k_{\min}}^{k_{\max}}$ such that \linebreak $\sum_{k=k_{\min}}^{k_{\max}} c_k^p=1$ and
    \[
      \left( \sum_{k=k_{\min}}^{k_{\max}} b_k^\frac{p}{p-q}\right)^\frac{p-q}{pq} = \left(\sum_{k=k_{\min}}^{k_{\max}} c_k^q b_k\right)^\frac1q.
    \]
\end{lemma}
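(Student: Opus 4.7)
The statement is the discrete H\"older inequality together with a description of the extremizing sequence, so the whole proof should be fairly short. The plan is to deduce the first inequality directly from the classical H\"older inequality, and then to verify the second part by explicitly constructing a candidate extremizer from the sequence $\{b_k\}$.

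\smallskip

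\textbf{Step 1 (the inequality).} Since $0<q<p$, the number $P:=p/q$ satisfies $P>1$, with conjugate exponent $P'=p/(p-q)$. Writing $a_k^q b_k=a_k^q\cdot b_k$ and applying the classical H\"older inequality for series with exponents $P$ and $P'$, I get
\[
\sum_k a_k^q b_k\le\left(\sum_k (a_k^q)^{P}\right)^{\!1/P}\!\!\left(\sum_k b_k^{P'}\right)^{\!1/P'}=\left(\sum_k a_k^p\right)^{q/p}\!\left(\sum_k b_k^{\frac{p}{p-q}}\right)^{\!\frac{p-q}{p}}\!.
\]
Raising to the power $1/q$ yields the claimed inequality.

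\smallskip

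\textbf{Step 2 (the extremal sequence).} I expect to exhibit the sequence that realizes equality in the H\"older step. Set $S:=\sum_k b_k^{p/(p-q)}$. Assume first $S\in(0,\infty)$ and define
\[
c_k:=S^{-1/p}\,b_k^{1/(p-q)}.
\]
Then $c_k^p=S^{-1}b_k^{p/(p-q)}$, so $\sum_k c_k^p=1$ as required. Moreover,
\[
c_k^q b_k=S^{-q/p}b_k^{q/(p-q)}\,b_k=S^{-q/p}b_k^{p/(p-q)},
\]
hence $\sum_k c_k^q b_k=S^{1-q/p}=S^{(p-q)/p}$, which after taking $1/q$-th power gives the desired identity.

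\smallskip

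\textbf{Step 3 (degenerate cases).} If $S=0$, then all $b_k$ vanish and any sequence with $\sum c_k^p=1$ works, using the conventions $0\cdot\infty=0$ and $0^0=1$ listed in the introduction. If $S=\infty$, a standard truncation argument handles it: apply the finite case to a sequence that agrees with $b_k$ for $|k|\le N$ and is zero otherwise, so the right-hand side can be made arbitrarily large while $\sum c_k^p=1$ is preserved; the statement that \emph{some} admissible $\{c_k\}$ realizes the equality is then interpreted in the extended-real sense. The only minor obstacle I foresee is bookkeeping the degenerate cases cleanly; there is no genuine analytic difficulty.
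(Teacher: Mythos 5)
Your argument is correct and is the standard one; the paper itself does not give a proof of Lemma~\ref{L:40} but merely cites references, so there is nothing to compare against beyond the expected textbook approach, which you follow. Steps~1 and~2 are exactly right: Hölder with exponents $p/q$ and $p/(p-q)$, and the explicit normalization $c_k=S^{-1/p}b_k^{1/(p-q)}$ when $0<S<\infty$.

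The only point worth tightening is Step~3 in the case $S=\infty$. As written, the truncation argument produces a \emph{family} of sequences $\{c_k^{(N)}\}_k$, each with unit $\ell^p$-norm and with $\sum_k (c_k^{(N)})^q b_k\to\infty$, but it does not by itself exhibit a single sequence $\{c_k\}$ achieving $\sum_k c_k^q b_k=\infty$; a limit of the $c^{(N)}$ need not exist, and ``interpreted in the extended-real sense'' is not a substitute for producing the object the lemma asserts. A clean fix is a gliding-hump construction: if some $b_{k_0}=\infty$, take $c_{k_0}=1$ and $c_k=0$ otherwise. If all $b_k<\infty$ but $S=\infty$, choose pairwise disjoint finite index blocks $I_1,I_2,\dots$ with $T_n:=\sum_{k\in I_n}b_k^{p/(p-q)}$ so large that $2^{-nq/p}\,T_n^{(p-q)/p}\ge 1$, and on each block set $c_k=2^{-n/p}\,T_n^{-1/p}\,b_k^{1/(p-q)}$ and $c_k=0$ off $\bigcup_n I_n$. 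Then $\sum_k c_k^p=\sum_n 2^{-n}=1$ while $\sum_{k\in I_n}c_k^q b_k=2^{-nq/p}T_n^{(p-q)/p}\ge 1$ for every $n$, so $\sum_k c_k^q b_k=\infty$, as desired. With this repair the proof is complete.
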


\begin{lemma}\label{L:20}
  Let $0<\alpha<\infty$ and $1<D<\infty$. Then there exists a~constant $C_{\alpha,D}\in(0,\infty)$ such that for any $k_{\min},k_{\max}\in\Z\cup\{\pm\infty\}$, $k_{\min}<k_{\max}$, and any two non-negative sequences $\{b_k\}_{k=k_{\min}}^{k_{\max}}$ and $\{c_k\}_{k=k_{\min}}^{k_{\max}}$, satisfying $b_{k+1}\ge D\, b_k$  for all $k\in\Z,\ k_{\min}\le k <k_{\max}$, there holds
    \[
      \sum_{k=k_{\min}}^{k_{\max}} \left( \sum_{m=k}^{k_{\max}} c_m \right)^\alpha b_k  \le C_{\alpha,D} \sum_{k=k_{\min}}^{k_{\max}} c_k^\alpha b_k,
    \]
    \[
      \sum_{k=k_{\min}}^{k_{\max}} \left(\sup_{k\le m \le k_{\max}} \!\! c_m \right)^\alpha b_k  \le C_{\alpha,D}  \sum_{k=k_{\min}}^{k_{\max}} c_k^\alpha b_k
    \]
  and
    \[
      \sup_{k_{\min}\le k \le k_{\max}} \left( \sum_{m=k}^{k_{\max}} c_m \right)^\alpha b_k  \le C_{\alpha,D} \sup_{k_{\min}\le k \le k_{\max}} c_k^\alpha b_k.
    \]
\end{lemma}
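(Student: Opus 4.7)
I would prove the three inequalities together, since they share the same underlying mechanism. First I would note that the middle inequality follows from the first one for free: for any non-negative sequence $\sup_{m\ge k}c_m\le\sum_{m\ge k}c_m$, hence $\bigl(\sup_{m\ge k}c_m\bigr)^{\alpha}\le\bigl(\sum_{m\ge k}c_m\bigr)^{\alpha}$ pointwise in $k$. So it suffices to deal with the sum–sum and sum–sup estimates. The whole argument then rests on two observations that follow from $b_{k+1}\ge Db_k$, namely $b_{k-j}\le D^{-j}b_k$ for $j\ge 0$, which gives the tail bound $\sum_{k\le m}b_k\le\frac{D}{D-1}\,b_m$, together with the ability to convert any swapped inner sum into an absolutely convergent geometric series.

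Next I would split the argument by the size of $\alpha$. In the easy range $0<\alpha\le 1$, subadditivity yields $\bigl(\sum_{m\ge k}c_m\bigr)^{\alpha}\le\sum_{m\ge k}c_m^{\alpha}$. Substituting this into the first inequality and swapping the order of summation produces $\sum_m c_m^{\alpha}\sum_{k\le m}b_k$, which by the tail bound above is controlled by $\frac{D}{D-1}\sum_m c_m^{\alpha}b_m$. The same substitution in the third inequality gives $\sup_k b_k\sum_{m\ge k}c_m^{\alpha}=\sup_k\sum_{m\ge k}c_m^{\alpha}b_m\,(b_k/b_m)$; I would bound $b_k/b_m\le D^{-(m-k)}$ to turn the inner sum into a geometric series dominated by $\sup_m c_m^{\alpha}b_m$.

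For $\alpha>1$, subadditivity fails and I would instead apply H\"older's inequality with a suitably chosen geometric weight. Fix $\sigma\in(0,1)$ (any choice, e.g.\ $\sigma=\tfrac12$, works) and split
\[
\sum_{m\ge k}c_m=\sum_{m\ge k}\bigl(c_m D^{(m-k)\sigma/\alpha}\bigr)\bigl(D^{-(m-k)\sigma/\alpha}\bigr).
\]
H\"older with exponents $\alpha$ and $\alpha'$ plus the convergence of $\sum_{j\ge 0}D^{-j\sigma/(\alpha-1)}$ produces
\[
\Bigl(\sum_{m\ge k}c_m\Bigr)^{\alpha}\lesssim\sum_{m\ge k}c_m^{\alpha}D^{(m-k)\sigma},
\]
with the implicit constant depending only on $\alpha$ and $D$. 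Multiplying by $b_k$, swapping summation in $k$ and $m$, and using $b_k\le D^{-(m-k)}b_m$ collapses the inner factor to $\sum_{j\ge 0}D^{-j(1-\sigma)}b_m$, a convergent geometric series precisely because $\sigma<1$; this proves the first inequality. The sup–sum estimate follows by the same route with $\sup_k$ in place of $\sum_k$, the only difference being that the inner geometric series on $j$ is then bounded trivially by its sum and the remaining $\sup_m c_m^{\alpha}b_m$ factors out.

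The only subtlety is a bookkeeping one: $\sigma$ has to be chosen so that both geometric series — the one arising from H\"older (requires $\sigma/(\alpha-1)>0$) and the one arising from the rearrangement (requires $1-\sigma>0$) — converge, and the resulting constant must depend only on $\alpha$ and $D$. Since $\sigma\in(0,1)$ delivers both, this is not a genuine difficulty; the conceptual core is just the interaction between H\"older's inequality and the geometric growth of $\{b_k\}$.
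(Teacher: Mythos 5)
Your proof is correct, and since the paper states this lemma without proof (citing \cite{GHS,GP,K-complut} as standard references), there is no proof in the paper to compare against. Your argument — subadditivity for $0<\alpha\le 1$, a weighted H\"older trick for $\alpha>1$, Tonelli for the sum--sum version, and the geometric tail bound $\sum_{k\le m}b_k\le\frac{D}{D-1}b_m$ throughout — is the standard way to establish this lemma and works uniformly for finite or infinite index ranges since all terms are non-negative; the observation that the sum--sup estimate follows trivially from the sum--sum one is also correct. One small remark: you could equally well absorb the $\alpha\le 1$ case into the H\"older argument by setting $\sigma$ arbitrary in $(0,1)$ and taking the trivial $\ell^\alpha\hookrightarrow\ell^1$ embedding when $\alpha\le 1$, but splitting the cases as you did is cleaner.
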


\begin{lemma}[Hardy's inequality]\label{L:35}
   Let $a,b\in[0,\infty]$ and let $\eta,\varrho$ be weights.

   {\rm(i)}
   Let $1<\alpha\le \beta<\infty$. Then the inequality
   \begin{multline*}
   \lt \int_a^b \lt \int_t^b h(s)\ds \rt^\beta \varrho(t)\dt \rt^\frac1\beta \\ \lesssim \lt \int_a^b h^\alpha(t)\eta(t) \dt \rt^\frac1\alpha \sup_{t\in(a,b)} \lt \int_a^t \varrho(s)\ds \rt^\frac1\beta \lt \int_t^b \eta^{1-\alpha'}(s) \ds \rt^\frac1{\alpha'}
   \end{multline*}
   holds for all $h\in\MM(a,b)$. Moreover, there exists a~function $g\in\MM(a,b)$ such that $\int_a^b g^\alpha \eta = 1$ and
   \[
   \sup_{t\in(a,b)} \lt \int_a^t \varrho(s)\ds \rt^\frac1\beta \lt \int_t^b \eta^{1-\alpha'}(s) \ds \rt^\frac1{\alpha'} \lesssim  \lt \int_a^b \lt \int_t^b g(s)\ds \rt^\beta \varrho(t)\dt \rt^\frac1\beta.
   \]

   {\rm(ii)}
   Let $1<\alpha<\infty$, $0<\beta<\alpha$ and $\int_t^b \eta^{1-\alpha'}(s)\ds <\infty$ for all $t\in(a,b)$. Then the inequality
   \begin{multline*}
	   \lt \int_a^b \lt \int_t^b h(s)\ds \rt^\beta \varrho(t)\dt \rt^\frac1\beta \\
	   \lesssim \lt \int_a^b \! h^\alpha(t)\eta(t) \dt \! \rt^{\!\!\frac1\alpha} \hspace{-4pt} \lt \!\int_a^b \!\! \lt \int_a^t \!\varrho(s) \ds \!\rt^{\!\!\frac{\alpha}{\alpha-\beta}} \hspace{-6pt} \lt \int_t^b \eta^{1-\alpha'}(s) \ds \rt^{\hspace{-6pt}\frac{\alpha(\beta-1)}{\alpha-\beta}} \hspace{-8pt} \eta^{1-\alpha'}(t) \dt \!\rt^{\!\!\!\frac{\alpha-\beta}{\alpha\beta}}
   \end{multline*}
   holds for all $h\in\MM(a,b)$. Moreover, there exists a~function $g\in\MM(a,b)$ such that $\int_a^b g^\alpha \eta = 1$ and
   \begin{multline*}
   \lt \int_a^b \lt \int_a^t \varrho(s) \ds \rt^\frac{\alpha}{\alpha-\beta}  \lt \int_t^b \eta^{1-\alpha'}(s) \ds \rt^\frac{\beta}{\alpha-\beta} \eta^{1-\alpha'}(t) \dt \rt^\frac{\alpha-\beta}{\alpha\beta} \\ \lesssim  \lt \int_a^b \lt \int_t^b g(s)\ds \rt^\beta \varrho(t)\dt \rt^\frac1\beta.
   \end{multline*}
\end{lemma}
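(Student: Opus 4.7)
The plan is to treat Lemma~\ref{L:35} as a pair of classical weighted Hardy-type inequalities for the Copson operator $Th(t)=\int_t^b h(s)\ds$, for which the direct inequality is obtained by combining a pointwise H\"older step with the standard Muckenhoupt--Bradley (case (i)) or Sawyer-type (case (ii)) weighted Hardy estimate, and the ``moreover'' part by an explicit construction of a near-extremal $g$. Since the result is classical and complete proofs are available in \cite{GHS,GP,K-complut}, my strategy is to outline the derivation rather than to carry out all technical bookkeeping.

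For part (i), the first step is to bound the inner integral pointwise by H\"older with exponent $\alpha$:
\[
\int_t^b h(s)\ds \le \left(\int_t^b h(s)^\alpha\eta(s)\ds\right)^{1/\alpha}\left(\int_t^b \eta(s)^{1-\alpha'}\ds\right)^{1/\alpha'}.
\]
Raising to the power $\beta$, integrating against $\varrho(t)\dt$, and exploiting the monotonicity of both factors on the right, one separates out the supremum factor of the claimed expression. What remains is a~weighted inequality for the Hardy operator in the regime $\beta/\alpha\ge 1$, which follows from the standard Bradley argument (or directly from Fubini plus a monotonicity trick, using that $t\mapsto\int_t^b h^\alpha\eta$ is nonincreasing). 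For the moreover part, given $\varepsilon>0$, pick $t_0\in(a,b)$ making the supremum within a factor $1-\varepsilon$ of its value, and test with
\[
g(s)=\eta(s)^{-\alpha'/\alpha}\chi_{(t_0,b)}(s)\left(\int_{t_0}^b \eta^{1-\alpha'}\right)^{-1/\alpha}.
\]
One checks $\int_a^b g^\alpha\eta=1$ and that, for $t<t_0$, $\int_t^b g\gtrsim\left(\int_{t_0}^b\eta^{1-\alpha'}\right)^{1/\alpha'}$; integration of $(\int_t^b g)^\beta$ against $\varrho(t)\dt$ on $(a,t_0)$ then produces the matching lower bound.

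For part (ii), where $0<\beta<\alpha$, the crucial substitution is $\Phi(t)=\int_t^b\eta^{1-\alpha'}\ds$, finite by assumption. The Sawyer-style trick is to insert an auxiliary power of $\Phi$ inside the pointwise H\"older step: for a carefully chosen exponent $\theta=\theta(\alpha,\beta)$, write
\[
\int_t^b h\ds = \int_t^b h(s)\Phi(s)^{-\theta}\Phi(s)^{\theta}\ds,
\]
apply H\"older with exponents $\alpha,\alpha'$, and use the identity $-\Phi'=\eta^{1-\alpha'}$ to collapse the inner factor into a~power of $\Phi$ alone. Raising to $\beta$, integrating against $\varrho(t)\dt$, and interchanging the order of integration recovers the displayed Sawyer-type expression. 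The extremizer for the moreover part is of the form $g=c\,\eta^{1-\alpha'}\Phi^{\sigma}$, with $\sigma$ and $c$ chosen to match exponents and normalization. The main technical obstacle throughout is the exponent bookkeeping: matching the specific fractions $\alpha/(\alpha-\beta)$ and $\alpha(\beta-1)/(\alpha-\beta)$ in the statement requires the correct choice of $\theta$ and $\sigma$, but once the substitution $\Phi$ is in place the remaining computations are direct.
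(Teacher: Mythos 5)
The paper itself does not prove Lemma~\ref{L:35}: it is stated as a known result with a pointer to \cite{GHS,GP,K-complut}, and you cite the same references. That said, your outline for the direct inequality in part (i) contains a step that does not survive inspection. After the pointwise H\"older bound
\[
\int_t^b h\ds \le \left(\int_t^b h^\alpha\eta\right)^{1/\alpha}\left(\int_t^b\eta^{1-\alpha'}\right)^{1/\alpha'},
\]
raising to the power $\beta$, integrating against $\varrho\dt$ and then ``separating out the supremum factor by monotonicity'' runs into a divergence. Concretely, if you set $A=\sup_t(\int_a^t\varrho)^{1/\beta}(\int_t^b\eta^{1-\alpha'})^{1/\alpha'}$ and use $(\int_t^b\eta^{1-\alpha'})^{\beta/\alpha'}\le A^\beta(\int_a^t\varrho)^{-1}$, you are left with
\[
A^\beta\int_a^b\left(\int_t^b h^\alpha\eta\right)^{\beta/\alpha}\frac{\varrho(t)}{\int_a^t\varrho(s)\ds}\dt,
\]
and the weight $\varrho(t)/\int_a^t\varrho$ has a logarithmic divergence at $t=a$, while the factor $(\int_t^b h^\alpha\eta)^{\beta/\alpha}$ tends to the \emph{positive} limit $(\int_a^b h^\alpha\eta)^{\beta/\alpha}$ there, so the integral is infinite in general. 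The same problem appears if you instead pull out $\sup_t(\int_t^b h^\alpha\eta)^{\beta/\alpha}$. The Bradley trick is not just ``technical bookkeeping'' deferred to a reference; it is precisely what is needed to avoid this divergence, and it works by inserting an auxiliary power of $V(t)=\int_t^b\eta^{1-\alpha'}$ \emph{inside} the H\"older step (e.g., pairing $h\eta^{1/\alpha}V^{-\lambda/\alpha'}$ with $\eta^{-1/\alpha}V^{\lambda/\alpha'}$ for a suitable $\lambda>0$) so that the resulting weight becomes integrable. As written, your sketch invokes Bradley by name but describes an argument that is structurally different from his and, if executed literally, fails.

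Your ``moreover'' construction in part (i) is correct: the test function $g=\eta^{1-\alpha'}\chi_{(t_0,b)}(\int_{t_0}^b\eta^{1-\alpha'})^{-1/\alpha}$ is normalised and does reproduce the supremum factor at $t_0$. For part (ii) the idea of inserting a power of $\Phi(t)=\int_t^b\eta^{1-\alpha'}$ into the pointwise H\"older step and then performing a second H\"older with exponents $\alpha/\beta$, $(\alpha/\beta)'$ followed by Fubini is the right spirit of Sawyer's argument, but as stated it leaves the exponent choices and the collapse of the auxiliary factors undetermined; one should at least record that the auxiliary power is chosen so that $\int_t^b \eta^{1-\alpha'}\Phi^\theta \approx \Phi(t)^{1+\theta}$, and the subsequent balance between the $\nu$-weighted Fubini term and the $\Phi$-weighted term is nontrivial. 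In short, the plan is sensible but the key mechanism in each part is exactly the part you defer, and in (i) the deferral hides a step that would need to be replaced rather than merely filled in.
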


The rest of the paper contains the detailed proof of the main results.

We are using the traditional terms ``sufficiency'' and ``necessity'', meaning that a~certain condition, e.g.~$A_1<\infty$, is sufficient and necessary, respectively, for $C$ being finite.

\begin{proof}[Proof of Theorem~\ref{T:main}] We begin with discretization of the quantity on the left hand side of~\eqref{E:main}. By~\cite[Theorem~3.8]{copson} applied to appropriate parameters, for any $h\in\MM$ one has
\Bdef{1}\Bdef{2}
  \begin{equation}\label{E:diskr}
    \left(\int_0^{\infty}\left(\int_t^\infty \left(\int_s^{\infty}h(y)\dy\right)^mu(s) \ds\right)^{\frac{q}{m}}w(t)\dt\right)^{\frac{1}{q}}
    \approx
    \B{1}+\B{2},
  \end{equation}
where
\[
\B{1}=\left(\sum_{k\in\K}\vp(t_{k-1})^q\left(\int_{\Delta_{k-1}}h(t)\dt\right)^q\right)^{\frac 1q}
\]
and
\[
\B{2}=\left(\sum_{k\in\K}\left(\int_{\Delta_{k-1}}\vp(t)^{m-1}\vp'(t)\left(\int_t^{t_{k}}h(s)\ds\right)^m\dt\right)^{\frac{q}{m}}\right)^{\frac1q}.
\]

We shall now prove the sufficiency part of the theorem, that is, upper bounds for $C$ in terms of the $A$'s. We shall distinguish several cases. At first, we make the following two observations which are valid in both cases (a) and (b).

Assume that $1<p\leq m<\infty$. Then, using the H\"older inequality, for each $h\in\MM$ we have
\begin{equation}\label{E:Hoelder-obyc}
\left(\int_{\Delta_{k-1}}h(t)\dt\right)^q
\le
\left(\int_{\Delta_{k-1}}h(t)^pv(t)\dt\right)^{\frac qp}
\left(\int_{\Delta_{k-1}}v(t)^{1-p'}\dt\right)^{\frac q{p'}}.
\end{equation}
 In addition to that, the Hardy inequality (Lemma \ref{L:35}(i)) yields
\begin{align}
&\left(\int_{\Delta_{k-1}} \hspace{-10pt} \vp(t)^{m-1}\vp'(t)\left(\int_t^{t_{k}} \hspace{-6pt} h(s)\ds\right)^m \hspace{-6pt} \dt\right)^{\frac{1}{m}}\label{E:Hardy-ab}\\
&\quad\lesssim
\sup_{t\in\Delta_{k-1}}\!\!
\left(\int_{t_{k-1}}^t \hspace{-6pt} \vp(s)^{m-1}\vp'(s)\ds\right)^{\!\frac1m}
\hspace{-6pt}\left(\int_t^{t_{k}} \hspace{-6pt} v(s)^{1-p'}\ds\right)^{\!\frac1{p'}}
\hspace{-4pt}\left(\int_{\Delta_{k-1}} \hspace{-10pt} h(s)^p v(s)\ds\right)^{\!\frac{1}{p}}.\nonumber
\end{align}

(a)  Let $1<p\leq m<\infty$ and $p\leq q<\infty$ and suppose that $A_1<\infty$.
Combining \eqref{E:Hoelder-obyc} with \eqref{E:Hardy-ab}, we get
\begin{align*}
&\B{1}+\B{2}\\
&\lesssim
\left(\sum_{k\in\K}\vp(t_{k-1})^q
\left(\int_{\Delta_{k-1}}h(t)^pv(t)\dt\right)^{\frac qp}
\left(\int_{\Delta_{k-1}}v(t)^{1-p'}\dt\right)^{\frac q{p'}}\right.\\
&\quad \left. +
\sum_{k\in\K}\left(\int_{\Delta_{k-1}} \hspace{-12pt} h(t)^pv(t)\dt\!\right)^{\!\frac qp}
\hspace{-6pt} \sup_{t\in\Delta_{k-1}} \!\! \left(\int_{t_{k-1}}^t\ \hspace{-12pt} \vp(s)^{m-1}\vp'(s)\ds\!\right)^{\hspace{-4pt}\frac qm}
\hspace{-6pt} \left(\int_t^{t_k} \!\!v(s)^{1-p'}\ds\!\right)^{\!\frac q{p'}}\!\right)^{\hspace{-6pt}\frac 1q}\\
&\lesssim
\sup_{k\in\K} \left(\sum_{k\in\K}\left(\int_{\Delta_{k-1}}h(t)^pv(t)\dt\right)^{\!\!\frac qp}\right)^{\!\!\frac 1q} \\
&\quad\times\!\!
\left(\!\vp(t_{k-1})\!\left(\int_{\Delta_{k-1}} \hspace{-12pt} v(t)^{1-p'}\!\dt\!\right)^{\!\!\frac 1{p'}} 
\hspace{-8pt} + \hspace{-4pt}
\sup_{t\in\Delta_{k-1}}\!\!
\left(\int_{t_{k-1}}^t \hspace{-11pt} \vp(s)^{m-1}\vp'(s)\ds\!\right)^{\!\!\frac 1m}
\hspace{-6pt}\left(\int_t^{t_k} \hspace{-6pt} v(s)^{1-p'}\!\ds\!\right)^{\!\!\frac 1{p'}}\!\right)\!\!.
\end{align*}
Therefore,
\begin{align*} 
\B{1}+\B{2}
&\ls
\sup_{k\in\K}\sup_{t\in\Delta_{k-1}}
\left(\vp(t_{k-1})+\left(\int_{t_{k-1}}^t\vp(s)^{m-1}\vp'(s)\ds\right)^{\frac 1m}\right)\\
&\hspace{70pt} \times
\left(\int_t^{t_k}v(s)^{1-p'}\ds\right)^{\frac 1{p'}}
\left(\sum_{k\in\K}\int_{\Delta_{k-1}}h(t)^pv(t)\dt\right)^{\frac 1p}\\
&\approx
\sup_{k\in\K}\sup_{t\in\Delta_{k-1}}
\vp(t)\left(\int_t^{t_k}v(s)^{1-p'}\ds\right)^{\frac 1{p'}}
\left(\int_0^{\infty}h(t)^pv(t)\dt\right)^{\frac{1}{p}}\\
&\lesssim
A_1 \left(\int_0^{\infty}h(t)^pv(t)\dt\right)^{\frac{1}{p}}.
\end{align*}
We have proved that
	\[
		C\lesssim A_1.
	\]

(b) Let $1<p\leq m<\infty$ and $0<q<p$. Suppose that $A_2+A_3<\infty$.  A~subsequent use of \eqref{E:Hoelder-obyc} and \eqref{E:Hardy-ab}, the discrete H\"older inequality (Lemma \ref{L:40}) and integration by parts yields
\begin{align*}
\B{1}+\B{2}
&\lesssim
\left(\sum_{k\in\K}\sup_{t\in\Delta_{k-1}}\left(\varphi(t_{k-1})+\left(\int_{t_{k-1}}^{t}
\vp(s)^{m-1}\vp'(s)\ds\right)^{\frac 1m}\right)^q \right. \\
& \qquad \times \left. \left(\int_{t}^{t_k}v(s)^{1-p'}\ds\right)^{\frac{q}{p'}}
\lt\int_{\Delta_{k-1}}h(t)^{p}v(t)\dt\right)^{\frac{q}{p}} \right)^{\frac{1}{q}}\\
&\lesssim
\left(\sum_{k\in\K}\sup_{t\in\Delta_{k-1}}\left(\varphi(t_{k-1})+\left(\int_{t_{k-1}}^{t}
\vp(s)^{m-1}\vp'(s)\ds\right)^{\frac 1m}\right)^r \right.\\ 
& \qquad \times \left.
\left(\int_{t}^{t_k}v(s)^{1-p'}\ds\right)^{\frac{r}{p'}}\right)
^{\frac{1}{r}} 
\left(\sum_{k\in\K}
\int_{\Delta_{k-1}}h(t)^{p}v(t)\dt\right)^{\frac{1}{p}}\\
&
\lesssim
\left(\sum_{k\in\K}\sup_{t\in\Delta_{k-1}}
\vp(t)^r\left(\int_{t}^{t_k}v(s)^{1-p'}\ds\right)^{\frac{r}{p'}}\right)^{\frac{1}{r}}
\left(\int_0^{\infty}h(t)^pv(t)\dt\right)^{\frac{1}{p}}\\
&
=D_2\left(\int_0^{\infty}h(t)^pv(t)\dt\right)^{\frac{1}{p}}.
\end{align*}
(In here, we use the notation from Corollary \ref{Cor}.) By~\eqref{E:copson-9}, one has
\Bdef{21}\Bdef{22}\Bdef{23}
\begin{align*}
	D_2
		&\approx
			\left( \sum_{k\in\K}
			\left( \int_{\Delta_{k-3}} w(s)\ds \right)^{\frac{r}{q}}
			\left( \int_{\Delta_{k-2}} u(y)\dy \right)^{\frac{r}{m}}
			\left(\int_{\Delta_{k-1}} v(s)^{1-p'}\ds\right)^{\frac{r}{p'}} \right.\\
		&\qquad +
			\sum_{k\in\K}
			\left(\int_{\Delta_{k-2}}w(s)
			\left(\int_{s}^{t_{k-1}}u(y)\dy\right)^{\frac{q}{m}}
			\ds\right)^{\frac{r}{q}}
			\left( \int_{\Delta_{k-1}} v(s)^{1-p'}\ds\right)^{\frac{r}{p'}}\\
		&\qquad +
			\left.\sum_{k\in\K}
			\left(\int_{\Delta_{k-2}} \hspace{-10pt} w(s)\ds \right)^{\frac{r}{q}}
			 \sup_{t\in\Delta_{k-1}} \left( \int_{t_{k-1}}^{t} \hspace{-10pt} u(y)\dy \right)^{\frac{r}{m}}
			\hspace{-4pt}\left(\int_t^{t_k} \hspace{-4pt} v(s)^{1-p'}\ds\right)^{\frac{r}{p'}}
			\right)^{\frac{1}{r}}\\
		& = \B{21} + \B{22} + \B{23}.
\end{align*}
	It is checked easily that  $\B{21}+\B{23} \lesssim A_2$. Recalling that $\frac{rq}{pm} + \frac qm = \frac rm $, for the term $\B{22}$ we get
		\begin{align*}
			\B{22}
				&   \approx
					\lt
					\sum_{k\in\K}
					\int_{\Delta_{k-2}} \hspace{-4pt}
					\left( \int_{t_{k-2}}^s \hspace{-10pt} w(z)
					\left( \int_{z}^{t_{k-1}}  \hspace{-10pt} u(y)\dy \right)^{\frac{q}{m}}
					\hspace{-6pt}\dz \right)^{\frac rp}
					w(s) \left(\int_{s}^{t_{k-1}} \hspace{-10pt} u(y)\dy\right)^{\frac{q}{m}}  \ds \right.\\
				&   \left.\quad\times
					\left( \int_{\Delta_{k-1}} v(s)^{1-p'}\ds\right)^{\frac{r}{p'}}
					\rt^\frac1r\\
				&	\approx
					\lt
					\sum_{k\in\K}
					\int_{\Delta_{k-2}}\hspace{-4pt}
					\left( \int_{t_{k-2}}^s \hspace{-10pt} w(z)
					\left( \int_{z}^s u(y)\dy \right)^{\frac{q}{m}}
					\hspace{-6pt}\dz \right)^{\frac rp}
					w(s) \left(\int_{s}^{t_{k-1}} \hspace{-10pt} u(y)\dy\right)^{\frac{q}{m}}  \ds \right.\\
				&   \left.\quad\times
					\left( \int_{\Delta_{k-1}} v(s)^{1-p'}\ds\right)^{\frac{r}{p'}}
					\rt^\frac1r\\
				&   \quad +
					\lt
					\sum_{k\in\K}
					\int_{\Delta_{k-2}} \hspace{-6pt}
					\left( \int_{t_{k-2}}^s\hspace{-10pt} w(z)\!
					\dz \!\right)^{\hspace{-2pt}\frac rp}
					\!w(s)\! \left(\int_{s}^{t_{k-1}} \hspace{-14pt} u(y)\dy\!\right)^{\!\frac rm} \hspace{-8pt} \ds
					\left( \int_{\Delta_{k-1}} \hspace{-12pt} v(s)^{1-p'}\ds\!\right)^{\hspace{-4pt}\frac{r}{p'}}
					\rt^{\!\!\frac1r} \\
				&   \lesssim A_3 + A_2.
		\end{align*}
Hence, we have proved
	\[
		D_2 \lesssim A_2 + A_3.
	\]
Therefore, we also obtain
\[
\B{1}+\B{2}
\lesssim
(A_2+A_3)
\left(\int_0^{\infty}h(t)^p v(t)\dt\right)^{\frac{1}{p}}
\]
and thus, finally,
	\[ C \lesssim A_2 +A_3. \]
This completes the sufficiency part in the case (b).\\

Before we turn our attention to the remaining two cases, we make a~universal observation valid for all parameters $m,q\in(0,\infty)$ and $p\in(1,\infty)$: Since, throughout the paper, we assume that $\vp(t)>0$ for every $t\in(0,\infty)$, the following implication is true:
   \begin{equation}\label{E:psourtek}
     A_1<\infty \qquad \Longrightarrow \qquad \int_{t}^{\infty}v(s)^{1-p'}\ds<\infty \text{\ for every\ } t\in(0,\infty).
   \end{equation}
This simple fact will be used on multiple occasions to ensure correctness of certain calculations.

Now we present an estimate which is relevant for cases (c) and (d). Assume that $0<m<p<\infty$, $p>1$ and $A_1<\infty$. Then the Hardy inequality (Lemma \ref{L:35}(ii)) with the observation \eqref{E:psourtek} implies that
\begin{align*}
&\left(\int_{\Delta_{k-1}}\vp(t)^{m-1}\vp'(t)
\left(\int_{t}^{t_k}h(s)\ds\right)^{m}\dt\right)^{\frac{1}{m}} \\
&\quad \lesssim
\left(\int_{\Delta_{k-1}} \!\!
\left(\int_{t_{k-1}}^{t} \! \vp(s)^{m-1}\vp'(s)\ds\right)^{\!\!\frac{p}{p-m}} \!\!\!
\left(\int_{t}^{t_k} \! v(s)^{1-p'}\ds\right)^{\!\!\frac{p(m-1)}{p-m}} \hspace{-8pt} v(t)^{1-p'}\dt\right)^{\hspace{-6pt}\frac{p-m}{{mp}}}\\
&\qquad\qquad \times \left(\int_{\Delta_{k-1}}h(s)^p v(s)\ds\right)^{\frac{1}{p}}\!. \nonumber
\end{align*}
	This estimate together with \eqref{E:Hoelder-obyc} yields
\begin{align*}
&\B{1}+\B{2} \\
&\lesssim
\left(\sum_{k\in\K}\vp(t_{k-1})^q
\left(\int_{\Delta_{k-1}}v(s)^{1-p'}\ds\right)^{\frac{q}{p'}}
\left(\int_{\Delta_{k-1}}h(s)^pv(s)\ds\right)^{\frac{q}{p}}
\right)^{\frac{1}{q}}\\
&\quad+\!
\left(\sum_{k\in\K} \!
\left(\int_{\Delta_{k-1}} \hspace{-5pt}
\left(\int_{t_{k-1}}^{t} \hspace{-8pt} \vp(s)^{m-1} \vp'(s)\ds\right)^{\hspace{-2pt}\frac{p}{p-m}} \hspace{-6pt}
\left(\int_{t}^{t_k} \hspace{-6pt} v(s)^{1-p'}\ds\right)^{\hspace{-6pt}\frac{p(m-1)}{p-m}} \hspace{-14pt} v(t)^{1-p'}\dt\right)^{\hspace{-6pt}\frac{(p-m)q}{{mp}}}\right.\\
&\quad\qquad\left.\times
\left(\int_{\Delta_{k-1}}h(s)^pv(s)\ds\right)^{\frac{q}{p}}
\right)^{\frac{1}{q}}.
\end{align*}
Integrating by parts, we get
\Bdef{3}
	\begin{equation}\label{E:126}
		\B{1}+\B{2}\lesssim \B{3},
	\end{equation}
where
	\[
		\B{3}= \hspace{-5pt}
			\left( \! \sum_{k\in\K}\hspace{-4pt}
			\left(\!\int_{\Delta_{k-1}}
			\hspace{-14pt} \vp(t)^\frac{pm}{p-m} \!\!
			\left(\int_{t}^{t_k} \hspace{-6pt} v(s)^{1-p'}\!\ds\!\right)^{\hspace{-6pt}\frac{p(m-1)}{p-m}} \hspace{-18pt} v(t)^{1-p'}\!\dt\!\right)^{\hspace{-6pt}\frac{(p-m)q}{{mp}}}
			\hspace{-10pt} \left(\int_{\Delta_{k-1}} \hspace{-14pt} h(s)^pv(s)\ds\!\right)^{\!\!\frac{q}{p}}
			\!\right)^{\!\!\frac{1}{q}}\!\!.
	\]

(c) Let $0<m<p<\infty$ and $1<p\leq q$. Assume that $A_1+A_4<\infty$. Recall that the implication \eqref{E:psourtek} assures that the estimate \eqref{E:126} is correct. Then the fact that $q\geq p$ guarantees that
\[
\B{3}\leq D_3
\left(\sum_{k\in\K}\int_{\Delta_{k-1}}h(s)^p v(s)\ds\right)^{\frac{1}{p}},
\]
where $D_3$ is defined in Corollary \ref{Cor}. Thus, by~\eqref{E:copson-9}, we get
\begin{align*}
D_3
	&\lesssim
		\sup_{k\in\K}
		\left(\int_{\Delta_{k-3}} w(t)\dt\right)^{\frac{1}{q}}
		\left(\int_{\Delta_{k-2}} u(t)\dt\right)^{\frac{1}{m}}
		\left(\int_{\Delta_{k-1}} v(t)^{1-p'}\dt\right)^{\frac{1}{p'}}\\
	& \quad +
		\sup_{k\in\K} \left(\int_{\Delta_{k-2}} w(t)\left(\int_t^{t_{k-1}}u(s)\ds\right)^{\frac{q}{m}}\dt\right)^{\frac{1}{q}}
		\left(\int_{\Delta_{k-1}} v(t)^{1-p'}\dt \right)^{\frac{1}{p'}}\\
	& \quad+
		\sup_{k\in\K} \left(\int_{\Delta_{k-2}} \hspace{-14pt} w(t)\dt \!\right)^{\hspace{-2pt}\frac{1}{q}}
		\hspace{-4pt} \left(\int_{\Delta_{k-1}} \hspace{-6pt} \left(\int_{t_{k-1}}^{t} \hspace{-12pt} u(s)\ds \!\right)^{\!\!\frac{p}{p-m}}
		\hspace{-6pt} \left(\int_{t}^{t_{k}} \hspace{-8pt} v(s)^{1-p'}\!\ds \!\right)^{\hspace{-4pt}\frac{p(m-1)}{p-m}}
		\hspace{-16pt} v(t)^{1-p'}\!\dt\!\right)^{\hspace{-6pt}\frac{p-m}{pm}}\\
	&\lesssim
		A_1+A_4,
\end{align*}
	We have shown that
		\[
			\B{3} \lesssim (A_1+ A_4) \left( \int_0^\infty h(s)^p v(s)\ds\right)^{\frac{1}{p}}.
		\]
From \eqref{E:diskr} and \eqref{E:126} we get
	\[
		C \lesssim A_1 + A_4,
	\]
proving the sufficiency part of the assertion (c).

(d) Let $0<m<p<\infty$, $p>1$ and $0<q<p<\infty$. Assume that $A_3+A_5<\infty$. An~observation similar to \eqref{E:psourtek} yields that if $A_3<\infty$, then $\int_t^\infty v(s)^{1-p'}\ds <\infty$ for all $t>0$. Hence, it is justified to use the estimate \eqref{E:126}.
	
	From \eqref{E:126} and the discrete H\"older inequality (Lemma \ref{L:40}), it follows that
\[
\B{3}\leq D_4 \left(\sum_{k\in\K}\int_{\Delta_{k-1}}h(t)^pv(t)\dt\right)^{\frac{1}{p}},
\]
where $D_4$ is the expression defined in Corollary \ref{Cor}. By~\eqref{E:copson-9}, we get
\begin{align*}
	D_4
		&\lesssim
			\left(\sum_{k\in\K}
			\left(\int_{\Delta_{k-3}} w(t)\dt\right)^{\frac{r}{q}}
			\left(\int_{\Delta_{k-2}} u(s)\ds\right)^{\frac{r}{m}}
			\left(\int_{\Delta_{k-1}} v(z)^{1-p'}\dz\right)^{\frac{r}{p'}}
			\right)^\frac1r\\
		&\quad
			+ \left(\sum_{k\in\K}
			\left( \int_{\Delta_{k-2}} \! w(t) \left( \int_{t}^{t_{k-1}} \! u(s)\ds \right)^{\frac{q}{m}} \dt \right)^{\frac{r}{q}}
			\left( \int_{\Delta_{k-1}} \! v(z)^{1-p'}\dz \right)^{\frac{r}{p'}}
			\right)^\frac1r\\
		&\quad
			+ \left( \sum_{k\in\K}
			\left(\int_{\Delta_{k-2}} w(t)\dt \right)^{\frac{r}{q}} \right.\\
		&\quad\quad\left.\times
			\left(\int_{\Delta_{k-1}}\!\!
			\left(\int_{t_{k-1}}^{t} \hspace{-6pt} u(s)\ds \right)^{\!\!\frac{p}{p-m}}
			\hspace{-6pt} \left(\int_{t}^{t_{k}} \hspace{-6pt} v(z)^{1-p'}\dz \right)^{\!\!\frac{p(m-1)}{p-m}}
			\hspace{-6pt} v(t)^{1-p'}\dt\right)^{\!\!\frac{q(p-m)}{m(p-q)}}
			\right)^{\!\!\frac{1}{r}}\!\!,
\end{align*}
and, consequently,
\begin{align*}
	D_4		&\lesssim
			A_5
			+ \!\left(\sum_{k\in\K}
			 \int_{\Delta_{k-2}} \hspace{-6pt} \lt  \int_{t_{k-2}}^y \hspace{-8pt} w(t) \left( \int_{t}^{t_{k-1}}  \hspace{-12pt} u(s)\ds \right)^{\!\!\frac qm} \hspace{-6pt} \dt \right)^{\!\!\frac rp}  w(y) \left( \int_{y}^{t_{k-1}} \hspace{-12pt} u(s)\ds \right)^{\!\!\frac qm} \hspace{-6pt}\dy \right.\\
		& \qquad\qquad\times\left.
			\left( \int_{\Delta_{k-1}} v(t)^{1-p'}\dt \right)^{\frac{r}{p'}}
			\right)^\frac1r\\
		&\lesssim
			A_5
			+ \!\left(\sum_{k\in\K}
			\int_{\Delta_{k-2}} \hspace{-6pt} \lt \int_{t_{k-2}}^y \hspace{-8pt} w(t) \left( \int_{t}^{y} \hspace{-4pt} u(s)\ds \right)^{\!\!\frac qm}  \hspace{-6pt} \dt \right)^{\!\!\frac rp} w(y) \left( \int_{y}^{t_{k-1}} \hspace{-12pt} u(s)\ds \right)^\frac qm \hspace{-6pt} \dy \right.\\
		&\qquad\qquad\left.\times
			\left( \int_{\Delta_{k-1}} v(t)^{1-p'}\dt \right)^{\frac{r}{p'}}
			\right)^\frac1r\\
		& \qquad +\!
			\left(\sum_{k\in\K}
			\int_{\Delta_{k\!-\!2}} \hspace{-6pt} \lt \int_{t_{k-2}}^y \hspace{-10pt} w(t) \dt \! \right)^{\hspace{-3pt}\frac rp} \! w(y)\!
			\left( \int_{y}^{t_{k-1}} \hspace{-12pt} u(s)\ds \! \right)^{\hspace{-4pt}\frac rm} \hspace{-6pt} \dy
			\left( \int_{\Delta_{k-1}} \hspace{-12pt} v(t)^{1-p'}\!\dt \!\right)^{\!\!\frac{r}{p'}}
			\!\right)^{\!\!\frac1r}\\
		& \lesssim A_3 + A_5.
\end{align*}
This estimate shows that
	\[
		\B{3} \lesssim (A_3+A_5) \left( \int_0^\infty h(t)^p v(t)\dt \right)^{\frac{1}{p}},
	\]
and thus (by \eqref{E:diskr} and \eqref{E:126}) also
	\[
		C \lesssim A_3 + A_5.
	\]
Therefore, we have finished proving the sufficiency part.\\

We shall now turn our attention to the necessity part of the theorem, that is, of the lower bounds for~$C$. In general, we will make use of the saturation of the H\"older and Hardy inequalities which have been used in the proof of the sufficiency part.

The H\"older inequality is saturated in the following sense. If $k\in\K$ and $1<p<\infty$, there exists a~function $g_k \in \MM$ supported in $\Delta_{k-1}$, satisfying $\int_{\Delta_{k-1}}g_k(t)^pv(t)\dt=1$ and such that
  \begin{equation}\label{E:nutnost-2}
    \left(\int_{\Delta_{k-1}}v(s)^{1-p'}\ds\right)^{\frac1{p'}}
    =\int_{\Delta_{k-1}}g_k(t)\dt.
  \end{equation}

From now on, we assume that $C<\infty$. As a~first step, we will show that the estimate $A_1\le C$ holds, in fact, for any parameters $m,q\in(0,\infty)$ and $p\in(1,\infty)$, thus in all cases (a)--(d). To prove this claim, fix $t>0$ and find, similarly as above, a~function $\psi\in\MM$ supported in $(t,\infty)$, satisfying $\int_0^\infty \psi(t)^p v(t)\dt=1$ and such that
  \[
    \lt \int_t^\infty v(s)^{1-p'} \ds \rt^\frac1{p'} = \int_t^\infty \psi(t)\dt.
  \]
Now we may write
  \begin{align*}
      \vp(t)\left(\int_t^{\infty}v(s)^{1-p'}\ds\right)^{\frac1{p'}} \! & = \lt \int_0^t w(s) \lt \int_s^t u(y)\dy \rt^\frac qm \ds \rt^\frac1q \int_t^\infty \psi(t)\dt \\
            & \le \lt \int_0^\infty \!\!w(s) \lt \int_s^\infty \!\! u(y) \lt \int_y^\infty \!\!\psi(t) \dt \rt^m \!\!\dy \rt^\frac qm \!\ds \rt^\frac1q \\
            & \le C.
  \end{align*}
In the last step we used the fact that $\|\psi\|_{L^p(v)}=1$. Taking the supremum over $t>0$, we get
  \[
    A_1 \le C.
  \]
In particular, we have proved the necessity part of (a). Notice that the simple estimate used above can be applied only to the condition $A_1$. Dealing with the rest of the $A$'s is more difficult and requires the use of discretization. By discretization one can also prove that $$A_1\lesssim D_1 \lesssim C$$ for all $m,q\in(0,\infty)$ and $p\in(1,\infty)$. However, we omit this particular detail.\\

(b) Let $1<p\leq m<\infty$ and $0<q<p$. Recall the saturation of the Hardy inequality (Lemma \ref{L:35}(i)). It grants that for every $k\in\K$ there exists a~function $h_k\in\M_+$ supported in $\Delta_{k-1}$, satisfying $\int_{\Delta_{k-1}}h_k(t)^pv(t)\dt=1$ and such that
  \begin{multline}\label{E:nutnost-1}
    \sup_{t\in\Delta_{k-1}}
    \left(\int_{t_{k-1}}^t\vp(s)^{m-1}\vp'(s)\ds\right)^{\frac1m}
    \left(\int_t^{t_{k}}v(s)^{1-p'}\ds\right)^{\frac1{p'}}\\
    \lesssim
    \left(\int_{\Delta_{k-1}}\vp(t)^{m-1}\vp'(t)\left(\int_t^{t_{k}}h_k(s)\ds\right)^{m}\dt\right)^{\frac1m}.
  \end{multline}
We may write
\[
A_2\!
=\!\!
\left(\sum_{k\in\K}\int_{\Delta_{k-1}}
\hspace{-6pt} \left(\int_{0}^{t}w(z)\dz\!\right)^{\!\frac{r}{p}}
\hspace{-4pt}w(t)\hspace{-6pt}
\sup_{s\in(t,\infty)}\!\!
\left(\int_{t}^{s}\!u(s)\ds\!\right)^{\!\frac{r}{m}}
\!\!\left(\int_{s}^{\infty} \hspace{-6pt} v(s)^{1-p'}\!\ds\!\right)^{\!\frac{r}{p'}} \hspace{-6pt} \dt\!\right)^{\!\frac{1}{r}}\!\!.
\]
Consequently,
\Bdef{5}\Bdef{6}\Bdef{7}
\[
A_2
\approx
\B{5}+\B{6}+\B{7},
\]
where
\[
\B{5}
=\!\!
\left(\sum_{k\in\K}\int_{\Delta_{k-1}}
\hspace{-6pt}\left(\int_{0}^{t} \! w(z)\dz\!\right)^{\!\frac{r}{p}}
\hspace{-6pt} w(t)
\hspace{-4pt}\sup_{s\in(t,t_k)} \!
\left(\int_{t}^{s} \! u(y)\dy\! \right)^{\!\!\frac{r}{m}}
\hspace{-6pt} \left(\int_{s}^{t_k} \hspace{-6pt} v(y)^{1-p'}\!\dy\right)^{\!\!\frac{r}{p'}}\hspace{-6pt} \dt\!\right)^{\!\!\frac{1}{r}}\!\!,
\]
\[
\B{6}
=\!\!
\left(\sum_{k\in\K\setminus\{K\}}\int_{\Delta_{k-1}}
\!\!\!\left(\int_{0}^{t} \! w(z)\dz\!\right)^{\!\!\frac{r}{p}}
\!w(t)\!
\left(\int_{t}^{t_k}\!\!\!u(s)\ds\!\right)^{\!\!\frac{r}{m}} \!\!\!\dt
\left(\int_{t_k}^{\infty}\!\!\!\!v(s)^{1-p'}\!\ds\!\right)^{\!\!\frac{r}{p'}}\!\right)^{\!\!\!\frac{1}{r}}\!\!,
\]
and
\[
\B{7}\!
=\!\!
\left(\sum_{k\in\K\setminus\{K\}}\int_{\!\Delta_{k\!-\!1}}
\!\!\!\left(\int_{0}^{t}\!w(z)\dz\!\right)^{\!\!\frac{r}{p}}
\!\!w(t)\dt
\!\!\sup_{s\in(t_k,\infty)}\!\!
\left(\int_{t_k}^{s}\!\!u(y)\dy\right)^{\!\!\frac{r}{m}}\!\!\!
\left(\int_{s}^{\infty}\!\!\!\!v(y)^{1-p'}\!\dy\right)^{\!\!\frac{r}{p'}}\!\right)^{\!\!\frac{1}{r}}\!\!.
\]
We shall now estimate each of these three terms separately. One has, due to the definition of the sets $\K_1$ and $\K_2$, related identities \eqref{E:copson-7}, \eqref{E:copson-8}, and the readily verified identity $\frac{r}{m}=\frac{rq}{mp}+\frac{q}{m}$, the following:
\begin{align*}
\B{5}
&\approx
\left(\sum_{k\in\K_1}\cdots\right)^{\frac{1}{r}}
+
\left(\sum_{k\in\K_2}\cdots\right)^{\frac{1}{r}}\\
&\lesssim
\left(\sum_{k\in\K_1}
\left(\int_{0}^{t_k}w(z)\dz\right)^{\frac{r}{q}}
\sup_{s\in \Delta_{k-1}}
\left(\int_{t_{k-1}}^{s} \hspace{-6pt} u(y)\dy\right)^{\!\!\frac{r}{m}}
\hspace{-6pt} \left(\int_{s}^{t_k}v(y)^{1-p'}\dy\right)^{\frac{r}{p'}}\right)^{\frac{1}{r}}\\
&\quad+
\left(\sum_{k\in\K_2}\int_{\Delta_{k-1}}\hspace{-6pt}
\left(\int_{0}^{t}\!w(z)\dz\!\right)^{\!\!\frac{r}{p}}
w(t)
\left(\int_{t}^{t_k}\hspace{-6pt}u(y)\dy\!\right)^{\!\!\frac{r}{m}}\!\!
\left(\int_{\Delta_{k-1}}\hspace{-14pt}v(y)^{1-p'}\!\dy\!\right)^{\!\!\frac{r}{p'}}\hspace{-8pt}\dt\!\right)^{\!\!\frac{1}{r}}\\
&\lesssim
\left(\sum_{k\in\K_1}
\left(\int_{0}^{t_k}w(z)\dz\right)^{\!\!\frac{r}{q}}
\sup_{s\in \Delta_{k-1}}
\left(\int_{t_{k-1}}^{s}\!\!\!u(y)\dy\right)^{\!\!\frac{r}{m}}
\left(\int_{s}^{t_k}\!\!v(y)^{1-p'}\dy\right)^{\frac{r}{p'}}\right)^{\!\!\frac{1}{r}}\\
&\quad +
\left(\sum_{k\in\K_2}
\vp(t_k)^r
\left(\int_{\Delta_{k-1}}
v(t)^{1-p'}\dt\right)^{\frac{r}{p'}}\right)^{\frac{1}{r}}\\
&\lesssim
\left(\sum_{k\in\K_1}
\left(\int_{0}^{t_{k-1}}\!\!\!\!w(z)\dz\right)^{\!\!\frac{r}{q}}
\sup_{s\in \Delta_{k-1}}
\left(\int_{t_{k-1}}^{s}\!\!u(y)\dy\right)^{\!\!\frac{r}{m}}\!\!
\left(\int_{s}^{t_k}\!\!v(y)^{1-p'}\dy\right)^{\!\!\frac{r}{p'}}\!\right)^{\!\!\frac{1}{r}}\\
&\quad +
\left(\sum_{k\in\K_2}
\vp(t_{k-1})^r
\left(\int_{\Delta_{k-1}}
v(t)^{1-p'}\dt\right)^{\frac{r}{p'}}\right)^{\frac{1}{r}},
\end{align*}
hence
\[
B_7\lesssim D_2.
\]
Similarly, making use of the fact that $\frac{r}{q}=\frac{r}{p}+1$, we get
\begin{align*}
\B{6}
&=\!
\left(\sum_{k\in\K\setminus\{K\}}\!\int_{\Delta_{k\!-\!1}}\!\!\!
\left(\int_{0}^{t}\!w(z)\dz\!\right)^{\!\!\frac{r}{p}}
\!\!w(t)
\left(\int_{t}^{t_k}\!\!\!u(s)\ds\!\right)^{\!\!\frac{r}{m}}\!\!
\left(\int_{t_k}^{\infty}\!\!\!v(s)^{1-p'}\!\ds\!\right)^{\!\!\frac{r}{p'}}\!\!\!\dt\!\right)^{\!\!\frac{1}{r}}\\
&=\!
\left(\sum_{k\in\K\setminus\{K\}}\int_{\Delta_{k-1}}
\left(\int_{0}^{t}w(z)\dz\right)^{\frac{r}{p}}
\left(\int_{t}^{t_k}u(s)\ds\right)^{\frac{rq}{mp}}
w(t) \right.\\
&\qquad \left.\times
\left(\int_{t}^{t_k}u(s)\ds\right)^{\frac{q}{m}}
\left(\int_{t_k}^{\infty}v(s)^{1-p'}\ds\right)^{\frac{r}{p'}}\dt\right)^{\frac{1}{r}}\\
&\lesssim\!
\left(\sum_{k\in\K\setminus\{K\}}
\vp(t_k)^{r}
\left(\int_{t_k}^{\infty}v(s)^{1-p'}\ds\right)^{\frac{r}{p'}}\right)^{\frac{1}{r}}.
\end{align*}
By \eqref{E:copson-6} and Lemma \ref{L:20}, this yields
  \[
    \B{6} \lesssim \left(\sum_{k\in\K\setminus\{K\}} \vp(t_k)^{r} \left(\int_{\Delta_k}v(s)^{1-p'}\ds\right)^{\frac{r}{p'}}\right)^{\frac{1}{r}} \lesssim D_2.
  \]
Returning to $\B{7}$, we have
\begin{align*}
\B{7}
&=
\left(\sum_{k\in\K\setminus\{K\}}\int_{\Delta_{k-1}}
\left(\int_{0}^{t}w(z)\dz\right)^{\frac{r}{p}}
w(t)\dt \right.\\
&\quad\qquad\left.\times
\sup_{k\leq j\leq K-1}\sup_{s\in \Delta_j}
\left(\int_{t_k}^{s}u(y)\dy\right)^{\frac{r}{m}}
\left(\int_{s}^{\infty}v(y)^{1-p'}\dy\right)^{\frac{r}{p'}}\right)^{\frac{1}{r}}\\
&\lesssim\!
\left(\sum_{k\in\K\setminus\{K\}} \hspace{-6pt}
\left(\int_{0}^{t_k} \hspace{-6pt} w(z)\dz\!\right)^{\!\!\frac{r}{q}}
\hspace{-6pt}\sup_{k\leq j\leq K\!-\!1}\sup_{s\in \Delta_j}
\left(\int_{t_j}^{s}u(y)\dy\right)^{\!\!\frac{r}{m}}\hspace{-6pt}
\left(\int_{s}^{\infty}\hspace{-6pt}v(y)^{1-p'}\dy\right)^{\!\!\frac{r}{p'}}\!\right)^{\!\!\frac{1}{r}}\\
&\quad +
\left(\sum_{k\in\K\setminus\{K-1,K\}}
\left(\int_{0}^{t_k}w(z)\dz\right)^{\frac{r}{q}} \right.\\
& \quad\qquad \times\left.
\sup_{k+1\leq j\leq K-1}
\left(\int_{t_k}^{t_j}u(y)\dy\right)^{\frac{r}{m}}
\left(\int_{t_j}^{\infty}v(y)^{1-p'}\dy\right)^{\frac{r}{p'}}\right)^{\frac{1}{r}}.
\end{align*}
By \eqref{E:copson-5}, \eqref{E:copson-6} and Lemma \ref{L:20}, we can continue as follows.
  \begin{align*}
    \B{7}
      &\lesssim
        \left(\sum_{k\in\K\setminus\{K\}}
        \left(\int_{0}^{t_k}w(z)\dz\right)^{\!\frac{r}{q}}
        \sup_{s\in \Delta_k}
        \left(\int_{t_k}^{s}u(y)\dy\right)^{\!\frac{r}{m}}\!
        \left(\int_{s}^{\infty}v(y)^{1-p'}\dy\right)^{\!\frac{r}{p'}}\right)^{\!\frac{1}{r}}\\
      &\quad +
        \left(\sum_{k\in\K\setminus\{K\!-\!1,K\}}\hspace{-6pt}
        \left(\int_{0}^{t_k} \hspace{-6pt} w(z)\dz\!\right)^{\!\!\frac{r}{q}}
        \!\sum_{j=k+1}^{K-1}\!\!
        \left(\int_{t_k}^{t_j}u(y)\dy\!\right)^{\!\!\frac{r}{m}}\!\!
        \left(\int_{t_{j}}^{\infty} \hspace{-6pt} v(y)^{1-p'}\!\dy\!\right)^{\!\frac{r}{p'}}\!\right)^{\!\!\frac{1}{r}} \\
      &\lesssim
        \left(\sum_{k\in\K\setminus\{K\}}\!\!
        \left(\int_{0}^{t_k}\!\!w(z)\dz\right)^{\!\!\frac{r}{q}}
        \sup_{s\in \Delta_k}
        \left(\int_{t_k}^{s}u(y)\dy\right)^{\!\!\frac{r}{m}}
        \left(\int_{s}^{t_{k+1}}\!\!\!v(y)^{1-p'}\dy\right)^{\!\!\frac{r}{p'}}\!\right)^{\!\!\frac{1}{r}}\\
      &\quad +
        \left(\sum_{k\in\K\setminus\{K\!-\!1,K\}}\!\!
        \left(\int_{0}^{t_k}\!w(z)\dz\!\right)^{\!\!\frac{r}{p}}
        \left(\int_{\Delta_k}u(y)\dy\right)^{\!\!\frac{r}{m}}\!\!
        \left(\int_{t_{k+1}}^{\infty}\!\!\!v(y)^{1-p'}\dy\right)^{\!\!\frac{r}{p'}}\!\right)^{\!\!\frac{1}{r}}\\
      &\quad +
        \left(\sum_{k\in\K\setminus\{K\!-\!1,K\}}\!\!\!
        \left(\int_{0}^{t_k}\!\!w(z)\dz\!\right)^{\!\!\frac{r}{q}}
        \!\sum_{j=k+1}^{K\!-\!1}\!\!
        \left(\int_{t_k}^{t_j}\!u(y)\dy\!\right)^{\!\!\frac{r}{m}}\!\!
        \left(\int_{t_{j}}^{\infty}\!\!v(y)^{1-p'}\!\dy\!\right)^{\!\!\frac{r}{p'}}\!\right)^{\!\!\frac{1}{r}}\!\!,
     \end{align*}
     that is,
  \begin{align*}
    \B{7}
      &\lesssim
        \left(\sum_{k\in\K}
        \sup_{s\in\Delta_{k-1}}\vp(s)^r
        \left(\int_{s}^{t_k}v(y)^{1-p'}\dy\right)^{\frac{r}{p'}}\right)^{\frac{1}{r}}\\
      &\quad +
        \left(\sum_{j\in\K\setminus\{K\}}\!\!
        \left(\int_{t_{j}}^{\infty}\!\!v(y)^{1-p'}\!\dy\right)^{\!\!\frac{r}{p'}}
        \!\!\sum_{k=-\infty}^{j-1}\!\!
        \left(\int_{0}^{t_k}\!w(z)\dz\right)^{\!\!\frac{r}{q}}
        \left(\int_{t_k}^{t_j\!}u(y)\dy\right)^{\!\!\frac{r}{m}}\!\right)^{\!\!\frac{1}{r}}\!\!.
  \end{align*}
Observe that $\frac{r}{q}>1$ and recall the inequality \eqref{E:copson-6}. It allows us to treat the inner sum in the second term as follows.
\begin{align*}
\B{7}\!
&\lesssim\!
D_2\!+\!
\left(\sum_{j\in\K\setminus \{K\}}\!\!\!
\left(\int_{t_{j}}^{\infty}\!\!\!\!v(y)^{1-p'}\!\dy\!\right)^{\!\!\frac{r}{p'}}\!\!\!
\left(\sum_{k=-\infty}^{j-1}\!\!
\left(\int_{\Delta_{k-1}}\hspace{-14pt}w(s)\ds\!\right)\!
\left(\int_{t_k}^{t_j}\!\!u(y)\dy\!\right)^{\!\!\frac{q}{m}}\!\right)^{\!\!\frac{r}{q}}\!\right)^{\!\!\frac{1}{r}}\\
&\lesssim
D_2+
\left(\sum_{j\in\K\setminus \{K\}}
\vp(t_j)^r
\left(\int_{t_{j}}^{\infty}v(y)^{1-p'}\dy\right)^{\frac{r}{p'}}\right)^{\frac{1}{r}}\\
&\lesssim
D_2.
\end{align*}
The last step follows from \eqref{E:copson-6} and Lemma \ref{L:20}. So far we have established the inequality
	$$A_2 \lesssim D_2.$$
In the next step, we will find a~similar estimate for $A_3$. Observe that the assumption $p\le m$ will not be used during the process, therefore the following estimates are valid even for $0<m<p$,\ $p>1$. We have
\Bdef{8}\Bdef{9}\Bdef{10}
\begin{align*}
A_3
&=
\left(\sum_{k\in\K}
\int_{\Delta_{k-1}}
\!\!\vp(t)^{\frac{rq}{p}}w(t)
\sup_{s\in(t,\infty)}
\left(\int_t^s u(y)\dy\right)^{\frac qm}\!
\left(\int_s^{\infty}v(y)^{1-p'}\dy\right)^{\frac r{p'}}\!\!\dt\right)^{\frac 1r}\\
&\approx
\left(\sum_{k\in\K}
\int_{\Delta_{k-1}}
\!\!\vp(t)^{\frac{rq}{p}}w(t)
\sup_{s\in(t,t_k)}
\left(\int_t^s u(y)\dy\right)^{\!\frac qm}
\left(\int_s^{t_k}v(y)^{1-p'}\dy\right)^{\!\frac r{p'}}\!\!\dt\right)^{\!\frac 1r}\\
&\quad +\!
\left(\sum_{k\in\K\setminus\{K\}}
\int_{\Delta_{k-1}}
\!\!\vp(t)^{\frac{rq}{p}}w(t)
\left(\int_t^{t_k} u(y)\dy\right)^{\frac qm}\!
\left(\int_{t_k}^{\infty}v(y)^{1-p'}\dy\right)^{\frac r{p'}}\!\!\dt\right)^{\!\frac 1r}\\
&\quad+\!
\left(\sum_{k\in\K\setminus\{K\}}\!
\int_{\Delta_{k-1}}
\hspace{-14pt}\vp(t)^{\frac{rq}{p}}w(t)\dt
\!\!\sup_{s\in(t_k,\infty)}\!\!
\left(\int_{t_k}^{s} u(y)\dy\right)^{\!\!\frac qm}\!\!
\left(\int_{s}^{\infty}\!\!v(y)^{1-p'}\!\dy\right)^{\!\!\frac r{p'}}\!\right)^{\!\!\frac 1r}\\
&= \B{8}+\B{9}+\B{10},
\end{align*}
say. By \eqref{E:copson-7} and \eqref{E:copson-8}, one gets
  \begin{align*}
      \B{8}
        &\approx
          \left(\sum_{k\in\K_1}\cdots\right)^{\frac{1}{r}}+
          \left(\sum_{k\in\K_2}\cdots\right)^{\frac{1}{r}}\\
        &\leq
          \left(\sum_{k\in\K_1}
          \int_{\Delta_{k-1}}\!\!\!\!w(t)\dt
          \sup_{s\in\Delta_{k-1}}
          \vp(s)^{\frac{rq}{p}}
          \left(\int_{t_{k-1}}^{s} \!\!u(y)\dy\right)^{\!\!\frac qm}\!\!
          \left(\int_{s}^{t_k}\!\!v(y)^{1-p'}\dy\right)^{\!\!\frac r{p'}}\right)^{\!\!\frac 1r}\\
        &\qquad+
          \left(\sum_{k\in\K_2}
          \vp(t_k)^{r}
          \left(\int_{\Delta_{k-1}}
          v(y)^{1-p'}\dy\right)^{\frac r{p'}}\right)^{\frac 1r}\\
        &\lesssim
          \left(\sum_{k\in\K_1}
          \int_{\Delta_{k-2}}\!\!\!\!w(t)\dt
          \sup_{s\in\Delta_{k-1}}\!
          \vp(s)^{\frac{rq}{p}}
          \left(\int_{t_{k-1}}^{s} \!\!\!u(y)\dy\right)^{\!\!\frac qm}\!\!
          \left(\int_{s}^{t_k}v(y)^{1-p'}\dy\right)^{\!\!\frac r{p'}}\!\right)^{\!\!\frac 1r}\\
        &\qquad +
          \left(\sum_{k\in\K_2}
          \vp(t_{k-1})^{r}
          \left(\int_{\Delta_{k-1}}
          v(y)^{1-p'}\dy\right)^{\frac r{p'}}\right)^{\frac 1r}\\
        &\lesssim
          D_2.
  \end{align*}
By the standard argument based on \eqref{E:copson-6} and Lemma \ref{L:20}, we also get
  \[
    \B{9}
      \le
      \left(\sum_{k\in\K\setminus\{K\}}
      \vp(t_k)^{r}
      \left(\int_{t_k}^{\infty}v(y)^{1-p'}\dy\right)^{\frac r{p'}}\right)^{\frac 1r}\lesssim D_2.
  \]
Moreover, the same argument is used to show the following.
\begin{align*}
\B{10}
&=\!
\left(\sum_{k\in\K\setminus\{K\}}
\int_{\Delta_{k-1}}\!
\!\!\!\!\!\vp(t)^{\frac{rq}{p}}w(t)\dt\!\!
\sup_{s\in(t_k,\infty)}\!\!
\left(\int_{t_k}^{s} \!\!u(y)\dy\!\right)^{\!\!\frac qm}\!\!
\left(\int_{s}^{\infty}\!\!\!\!v(y)^{1-p'}\!\dy\!\right)^{\!\!\frac r{p'}}\!\right)^{\!\!\frac 1r}\\
&\leq\!
\left(\sum_{k\in\K\setminus\{K\}}
\vp(t_k)^{\frac{rq}{p}}
\sup_{s\in(t_k,\infty)}
\vp(s)^{q}
\left(\int_{s}^{\infty}v(y)^{1-p'}\dy\right)^{\frac r{p'}}\right)^{\frac 1r}\\
&\lesssim\!
\left(\sum_{k\in\K\setminus\{K\}}
\vp(t_k)^{\frac{rq}{p}}
\sup_{s\in\Delta_{k}}
\vp(s)^{q}
\left(\int_{s}^{\infty}v(y)^{1-p'}\dy\right)^{\frac r{p'}}\right)^{\frac 1r}\\
&\leq\!
\left(\sum_{k\in\K}
\sup_{s\in\Delta_{k-1}}
\vp(s)^{r}
\left(\int_{s}^{\infty}v(y)^{1-p'}\dy\right)^{\frac r{p'}}\right)^{\frac 1r}\\
&\lesssim D_2.
\end{align*}
Hence, we finally obtain
\[
A_3\lesssim D_2.
\]
It remains to estimate $D_2$. We shall use the saturation of the discrete H\"older inequality (see Lemma \ref{L:40}) in the following form: there exists a~sequence of non-negative numbers, $\{a_k\}_{k\in\K}$, such that $\sum_{k\in\K}a_k^p=1$ and
  \begin{align*}
    D_2&=
    \left(\sum_{k\in\K}\sup_{t\in\Delta_{k-1}}\vp(t)^r
    \left(\int_t^{t_k}v(s)^{1-p'}\ds\right)^{\frac{r}{p'}}
    \right)^{\frac{1}{r}}\\
    &=
    \left(\sum_{k\in\K} a_k^q
    \sup_{t\in\Delta_{k-1}}\vp(t)^q
    \left(\int_t^{t_k}v(s)^{1-p'}\ds\right)^{\frac{q}{p'}}
    \right)^{\frac{1}{q}}.
  \end{align*}
We define the functions
\[
g(t)=\sum_{k\in\K}a_kg_k(t),\qquad h(t)=\sum_{k\in\K}a_kh_k(t)
\]
for any $t>0$, and note that $\|g\|_{L^p(v)}=\|h\|_{L^p(v)}=1$. The functions $g_k$, $h_k$ were defined earlier, see \eqref{E:nutnost-2} and \eqref{E:nutnost-1}. (The definition of $h_k$ is independent of $q$.) Integrating by parts, we get
\begin{align*}
D_2 &=
    \left(\sum_{k\in\K} a_k^q
    \sup_{t\in\Delta_{k-1}}\vp(t)^q
    \left(\int_t^{t_k}v(s)^{1-p'}\ds\right)^{\frac{q}{p'}}
    \right)^{\frac{1}{q}} \\
&\approx
\left(\sum_{k\in\K}a_k^{q}\vp(t_{k-1})^{q}
\left(\int_{\Delta_{k-1}}v(s)^{1-p'}\dy\right)^{\frac{q}{p'}}\right)^{\frac{1}{q}}\\
&\qquad +
\left(\sum_{k\in\K}a_k^{q}\sup_{t\in\Delta_{k-1}}
\left(\int_{t_{k-1}}^{t}\vp(s)^{m-1}\vp'(s)\ds\right)^{\frac{q}{m}}
\left(\int_{t}^{t_k}v(s)^{1-p'}\dy\right)^{\frac{q}{p'}}\right)^{\frac{1}{q}}\\
&\lesssim
\left(\sum_{k\in\K}a_k^{q}\vp(t_{k-1})^{q}
\left(\int_{\Delta_{k-1}}g_k(s)\dy\right)^{q}\right)^{\frac{1}{q}}\\
&\qquad +
\left(\sum_{k\in\K}a_k^{q}
\left(\int_{\Delta_{k-1}}\vp(s)^{m-1}\vp'(s)
\left(\int_{s}^{t_k}h_k(y)\dy\right)^{m}\ds\right)^{\frac{q}{m}}
\right)^{\frac{1}{q}}.
\end{align*}
Using this and the properties of $g$ and $h$, we arrive at
\begin{align*}
D_2&\lesssim
\left(\sum_{k\in\K}\vp(t_{k-1})^{q}
\left(\int_{\Delta_{k-1}}(g(s)+h(s))\ds\right)^{q}\right)^{\frac{1}{q}}\\
&\qquad +
\left(\sum_{k\in\K}
\left(\int_{\Delta_{k-1}}\vp(s)^{m-1}\vp'(s)
\left(\int_{s}^{t_k}(g(y)+h(y))\dy\right)^{m}\ds\right)^{\frac{q}{m}}
\right)^{\frac{1}{q}}.
\end{align*}
Finally, using \eqref{E:diskr}, we obtain
  \[
    D_2 \lesssim C \|h+g\|_{L^p(v)} \lesssim C,
  \]
since $\|h+g\|_{p}\leq 2$. This proves the necessity part of the assertion in the case (b).\\

The proof of the remaining cases relies on the saturation of the Hardy inequality in the case $0<m<p<\infty$ and $p>1$ (see Lemma \ref{L:35}(ii)). Precisely, for every $k\in\K$ there exists a function $f_k\in\MM$ such that $\supp f_k\subset\Delta_{k-1}$, $\int_{\Delta_{k-1}}f_k(t)^{p}v(t)\dt=1$ and
	\begin{align}
		&\left(\int_{\Delta_{k-1}}\!\!
		\left(\int_{t_{k-1}}^{t}\!\!\!\vp(s)^{m-1}\vp'(s)\ds\right)^{\!\!\frac{p}{p-m}}\!\!\!
		\lt \int_t^{t_k} \!\!\!v(z)^{1-p'} \dz \rt^{\!\!\!\frac{p(m-1)}{p-m}}
		\!\!\!\!\!\!\!v(t)^{1-p'}\dt\right)^{\!\!\!\frac{p-m}{mp}}\label{E:nutnost-f}\\
		&\qquad \lesssim
		\left(\int_{\Delta_{k-1}}
		\vp(t)^{m-1}\vp'(t)
		\left(\int_{t}^{t_k}f_k(s)\ds\right)^{m}\dt\right)^{\frac{1}{m}}.\nonumber
	\end{align}

(c) Let $0<m<p<\infty$ and $1<p\leq q$. By the universal estimate proven in the beginning of the necessity part, we already know that $A_1\le C<\infty$ is necessarily true. Therefore, the implication \eqref{E:psourtek} keeps $A_4$ meaningful even in case that $m<1$ (which is allowed here). Note that
\[
D_1\lesssim D_3,
\]
because
\begin{align*}
D_1
&\approx
\sup_{k\in\K}
\sup_{t\in\Delta_{k-1}}\vp(t)
\left(\int_{t}^{t_k}
\left(\int_{s}^{t_k}v(y)^{1-p'}\dy\right)^{\frac{p(m-1)}{p-m}}
v(s)^{1-p'}\ds\right)^{\frac{p-m}{pm}}\\
&\leq
\sup_{k\in\K}
\left(\int_{\Delta_{k-1}}
\vp(t)^{\frac{mp}{p-m}}
\left(\int_{t}^{t_k}v(s)^{1-p'}\ds\right)^{\frac{p(m-1)}{p-m}}
v(t)^{1-p'}\dt\right)^{\frac{p-m}{pm}}\\
&=D_3.
\end{align*}

We begin with estimating $A_4$. Taking \eqref{E:psourtek} into account and integrating by parts, for every fixed $t>0$ we get
\begin{align*}
	&\int_t^{\infty}\left(\int_t^s u(y)\dy\right)^{\frac m{p-m}}u(s)\left(\int_{s}^{\infty}v(\tau)^{1-p'}\dtau\right)^{\frac{m(p-1)}{p-m}}\ds \\
	&\approx \int_t^{\infty} \left(\int_t^s u(y)\dy\right)^{\frac p{p-m}}\left(\int_{s}^{\infty}v(\tau)^{1-p'}\dtau\right)^{\frac{p(m-1)}{p-m}}v(s)^{1-p'}\ds \\
	&\qquad + \lim_{s\to\infty} \left(\int_t^s u(y)\dy\right)^{\frac p{p-m}} \left(\int_{s}^{\infty}v(\tau)^{1-p'}\dtau\right)^{\frac{m(p-1)}{p-m}}\\
	&\approx \int_t^{\infty} \left(\int_t^s u(y)\dy\right)^{\frac p{p-m}} \left(\int_{s}^{\infty}v(\tau)^{1-p'}\dtau\right)^{\frac{p(m-1)}{p-m}}v(s)^{1-p'}\ds.
\end{align*}
This verifies that $A_4\approx A_4^*$, hence we have
\Bdef{11}\Bdef{12}\Bdef{13}
\begin{align*}
A_4
\approx A_4^*
&=
\sup_{k\in\K}
\sup_{t\in\Delta_{k-1}}
\left(\int_0^tw(s)\ds\right)^{\frac1q} \\
&\qquad\times
\left(\int_t^{\infty}\!\!\left(\int_t^s u(y)\dy\right)^{\!\!\frac m{p-m}} \!\!u(s) \left(\int_{s}^{\infty}v(\tau)^{1-p'}\dtau\right)^{\!\!\frac{m(p-1)}{p-m}} \!\!\!\ds\right)^{\!\!\frac{p-m}{pm}}\\
&\approx
\B{11}+\B{12}+\B{13},
\end{align*}
where
\[
\B{11}\!
=\!
\sup_{k\in\K}
\sup_{t\in\Delta_{k\!-\!1}}\!\!
\left(\int_0^t\!\!w(s)\ds\!\right)^{\!\!\frac1q}\hspace{-4pt}
\left(\int_t^{t_k}\hspace{-6pt}\left(\int_t^s \!\!u(y)\!\dy\!\right)^{\!\!\frac m{p\!-\!m}} \!\!\!\!u(s) \!\left(\int_{s}^{t_k} \!\!\!v(z)^{1\!-\!p'}\!\dz\!\right)^{\!\!\!\!\frac{m(p-1)}{p-m}} \hspace{-14pt}\ds\!\right)^{\hspace{-7pt}\frac{p-m}{pm}}\hspace{-8pt},
\]
\[
\B{12}
=
\sup_{k\in\K\setminus\{K\}}
\sup_{t\in\Delta_{k-1}}
\left(\int_0^tw(s)\ds\right)^{\frac1q}
\left(\int_{t}^{t_k}u(s)\ds\right)^{\frac{1}{m}}\left(\int_{t_k}^{\infty}v(z)^{1-p'}\dz\right)^{\frac{1}{p'}}
\]
and
	\begin{align*}
		\B{13} &
		=
		\sup_{k\in\K\setminus\{K\}}
		\sup_{t\in\Delta_{k-1}}
		\left(\int_0^{t} w(s)\ds\right)^{\frac1q}\\
		& \qquad
		\left(\int_{t_k}^{\infty}\left(\int_{t}^s u(y)\dy\right)^{\frac m{p-m}} u(s) \left(\int_{s}^{\infty}v(z)^{1-p'}\dz\right)^{\frac{m(p-1)}{p-m}} \ds\right)^{\frac{p-m}{pm}}.
	\end{align*}
Integrating by parts, we obtain
	\begin{align*}
		\B{11}&\lesssim \sup_{k\in\K}
		\sup_{t\in\Delta_{k-1}}
		\left(\int_0^t w(s)\ds\right)^{\frac1q}\\
		& \qquad\times
		\left(\int_t^{t_k}\hspace{-4pt}\left(\int_t^s u(y)\dy\right)^{\frac p{p-m}} \hspace{-4pt}\left(\int_{s}^{t_k} v(z)^{1-p'}\dz\right)^{\hspace{-4pt}\frac{p(m-1)}{p-m}} \hspace{-6pt} v(s)^{1-p'}\ds\right)^{\hspace{-6pt}\frac{p-m}{pm}}\\
		&\lesssim D_3.
	\end{align*}
Similarly, we get
\[
\B{12}\lesssim D_1\lesssim D_3.
\]
Now we are going to estimate the term $\B{13}$. First, by \eqref{E:copson-5} and Lemma \ref{L:20} we get
\Bdef{14}\Bdef{15}
\begin{align*}
\B{13}
&=
\sup_{k\in\K\setminus\{K\}}
\sup_{t\in\Delta_{k-1}}
\left(\int_0^{t}w(s)\ds\right)^{\frac{m}{pq}+\frac{p-m}{pq}} \\
&\qquad\qquad\times
\left(\int_{t_k}^{\infty}\left(\int_{t}^s u(y)\dy\right)^{\frac{m}{p-m}}u(s)\left(\int_{s}^{\infty}v(z)^{1-p'}\dz\right)^{\!\!\frac{m(p-1)}{p-m}}\!\!\!\ds\right)^{\!\!\frac{p-m}{pm}}\\
&\leq
\sup_{k\in\K\setminus\{K\}}
\!\!\left(\int_0^{t_k}\!\!w(s)\ds\right)^{\!\!\!\frac{p-m}{pq}}\!\!\!
\left(
\int_{t_{k}}^{\infty}\!\!\!\vp(s)^{\frac{m^2}{p-m}}u(s)
\left(\int_{s}^{\infty}\!\!\!v(z)^{1-p'}\dz\right)^{\hspace{-6pt}\frac{m(p-1)}{p-m}}\hspace{-10pt}\ds
\right)^{\hspace{-6pt}\frac{p-m}{pm}}\\
&\lesssim
\sup_{k\in\K\setminus\{K\}}\hspace{-4pt}
\left(\int_0^{t_k}\hspace{-4pt}w(s)\ds\right)^{\!\!\frac{p-m}{pq}}
\hspace{-6pt}\left(
\int_{\Delta_{k}}\hspace{-6pt}\vp(s)^{\frac{m^2}{p-m}}u(s)
\left(\int_{s}^{\infty}\hspace{-6pt}v(z)^{1-p'}\dz\right)^{\hspace{-6pt}\frac{m(p-1)}{p-m}}\hspace{-10pt}\ds
\right)^{\hspace{-6pt}\frac{p-m}{pm}}\\
&\approx
\sup_{k\in\K\setminus\{K\}}\hspace{-4pt}
\left(\int_0^{t_k}\hspace{-4pt}w(s)\ds\right)^{\!\!\frac{p-m}{pq}}
\hspace{-6pt}\left(
\int_{\Delta_{k}}\hspace{-6pt}\vp(s)^{\frac{m^2}{p-m}}u(s)
\left(\int_{s}^{t_{k+1}}\hspace{-10pt}v(z)^{1-p'}\dz\right)^{\hspace{-6pt}\frac{m(p-1)}{p-m}}\hspace{-10pt}\ds
\right)^{\hspace{-6pt}\frac{p-m}{pm}}\\
&\qquad +
\sup_{k\in\K\setminus\{K,K-1\}}
\left(\int_0^{t_k}w(s)\ds\right)^{\!\!\frac{p-m}{pq}}
\left(
\int_{\Delta_{k}}\vp(s)^{\frac{m^2}{p-m}}u(s)
\ds
\right)^{\frac{p-m}{pm}}\\
&\hspace{120pt} \times
\left(\int_{t_{k+1}}^{\infty}v(z)^{1-p'}\dz\right)^{\!\!\frac{1}{p'}}\\
&=\B{14}+\B{15},
\end{align*}
say. We note that for every $a\in\Delta_k$, integration by parts gives
\begin{multline}\label{E:Hardy-lemma}
\int_{a}^{t_{k+1}}
u(s)
\left(\int_{s}^{t_{k+1}} \hspace{-5pt} v(z)^{1-p'}\dz\right)^{\!\frac{m(p-1)}{p-m}}\hspace{-10pt} \ds \\
\lesssim
\int_{a}^{t_{k+1}}
\left(\int_{t_k}^{s}u(y)\dy\right)
\left(\int_{s}^{t_{k+1}} \hspace{-5pt} v(z)^{1-p'}\dz\right)^{\!\frac{p(m-1)}{p-m}}
\hspace{-10pt} v(s)^{1-p'}\ds.
\end{multline}
Therefore, since the function $s\mapsto \vp(s)^{\frac{m^2}{p-m}}$ is increasing, from the Hardy lemma \cite[p.~56]{BS} we get
\begin{align*}
\B{14}
&\lesssim
\sup_{k\in\K\setminus\{K\}}
\left(\int_0^{t_k}w(s)\ds\right)^{\frac{p-m}{pq}} \\
& \hspace{50pt} \times\!
\left(
\int_{\Delta_{k}}\!\!\vp(s)^{\frac{m^2}{p-m}}
\!\!\int_{t_k}^{s}\!u(y)\dy
\left(
\int_{s}^{t_{k+1}}\hspace{-12pt}v(z)^{1-p'}\dz\right)^{\hspace{-4pt}\frac{p(m-1)}{p-m}}
\hspace{-12pt}v(s)^{1-p'}\!\ds
\right)^{\hspace{-4pt}\frac{p-m}{pm}}\\
&\leq D_3,
\end{align*}
since $\frac{m^2}{p-m}+m=\frac{mp}{p-m}$. Next, using \eqref{E:copson-6} and Lemma \ref{L:20} we obtain
\begin{align*}
\B{15}
&\lesssim
\!\!\sup_{k\in\K\setminus\{K,K\!-\!1\}}\!\!
\left(\int_0^{t_k}\!\!\!w(s)\ds\!\right)^{\!\!\!\frac{p-m}{pq}}
\!\!\!\left(
\int_{\Delta_{k}}\!\!\!u(y)\!\dy\!
\right)^{\!\frac{p-m}{pm}}
\!\!\!\!\vp(t_{k+1})^{\frac{m}{p}}\!\!
\left(\int_{t_{k+1}}^{\infty}\!\!\!v(z)^{1-p'}\dz\!\right)^{\!\!\frac{1}{p'}}\\
&\lesssim
\sup_{k\in\K\setminus\{K,K-1\}}
\vp(t_{k+1})
\left(\int_{t_{k+1}}^{\infty}v(z)^{1-p'}\dz\right)^{\frac{1}{p'}}\\
&\lesssim
\sup_{k\in\K}
\vp(t_{k-1})
\left(\int_{\Delta_{k-1}}v(z)^{1-p'}\dz\right)^{\frac{1}{p'}}\\
&\lesssim D_1\lesssim D_3.
\end{align*}
To finish this case, we need an estimate of $D_3$. Fix $k\in\K$. Then, integrating by parts and using the properties of the functions $g_k$ and $f_k$ (see \eqref{E:nutnost-2} and \eqref{E:nutnost-f}), we get
\begin{align*}
&\left(
\int_{\Delta_{k-1}}
\vp(t)^{\frac{mp}{p-m}}
\left(\int_{t}^{t_k}v(s)^{1-p'}\ds\right)^{\frac{p(m-1)}{p-m}}
v(t)^{1-p'}\dt
\right)^{\frac{p-m}{pm}}\\
&\approx
\vp(t_{k-1})
\left(\int_{\Delta_{k-1}}
v(t)^{1-p'}\dt
\right)^{\frac{1}{p'}}\\
&\quad+
\left(\int_{\Delta_{k-1}}\!\!
\left(\int_{t_{k-1}}^{t}\!\!\vp(s)^{m-1}\vp'(s)\ds\right)^{\!\!\frac{p}{p-m}}\!\!
\left(\int_{t}^{t_k}\!\!v(s)^{1-p'}\ds\right)^{\!\!\frac{p(m-1)}{p-m}}
\!\!\!v(t)^{1-p'}\dt\right)^{\!\!\frac{p-m}{mp}}\\
&\lesssim
\vp(t_{k-1})
\left(\int_{\Delta_{k-1}}
\!\!g_k(t)\dt\right)
+
\left(\int_{\Delta_{k-1}}
\vp(t)^{m-1}\vp'(t)
\left(\int_{t}^{t_k}f_k(s)\ds\right)^{\!m}
\!\!\dt\right)^{\!\!\frac{1}{m}},\\
&\lesssim
\left(\int_{0}^{\infty}
w(t)\left(\int_t^{\infty}u(s)
\left(\int_s^{\infty}g_k(y)+f_k(y)\dy\right)^{m}\ds\right)^{\frac{q}{m}}
\dt\right)^{\frac{1}{q}}\\
&\lesssim C\|g_k+f_k\|_{L^p(v)}.
\end{align*}
In the last estimate we used \eqref{E:diskr}. Recalling that $\|g_k+f_k\|_{L^p(v)}\leq 2$ and taking supremum over all $k\in\K$, we get
\[
D_3\lesssim C,
\]
proving the necessity part of the assertion in the case (c). \\

(d) Let $0<m<p<\infty$, $1<p$ and $0<q<p<\infty$. As in (c), since we already know that $A_1\le C < \infty$, we recall \eqref{E:psourtek} when treating expressions involving the term $\int_t^\infty v(s)^{1-p'}(s)\ds$ with a~negative exponent. In particular, this also justifies the use of integration by parts to show that $A_5\approx A_5^*$.

Recall that, in the necessity part of (b), we have already proved that $A_3\lesssim D_2$, using only the assumption $0<q<p<\infty$ (with $p>1$) and not the relation between $p$ and $m$. Hence, the estimate $A_3\lesssim D_2$ is valid with the setting of parameters from (d) as well. Since $D_2 \lesssim D_4$ is obviously true, we obtain
	\[
		A_3 \lesssim D_4.
	\]
We continue with the term $A_5$. One has
\Bdef{16}\Bdef{17}\Bdef{18}
	\begin{align*}
	A_5
		& = \Bigg(
			\sum_{k\in\K}\int_{\Delta_{k-1}}
			\left(\int_0^t w(s)\ds\right)^{\frac{r}{p}}
			w(t) \\
		& \qquad \times
			\left(
			\int_{t}^{\infty}
			\!\!\left(\int_{t}^{z}u(s)\ds\right)^{\!\!\frac{p}{p-m}} \hspace{-4pt}
			\left(\int_{z}^{\infty} \hspace{-4pt} v(s)^{1-p'}\ds\right)^{\!\!\!\frac{p(m-1)}{p-m}} \hspace{-10pt} v(z)^{1-p'}\dz
			\right)^{\!\!\!\frac{q(p-m)}{m(p-q)}}
			\hspace{-12pt}\dt
			\Bigg)^{\!\!\frac {1}{r}}\\
		&\approx
			\Bigg(
			\sum_{k\in\K}\int_{\Delta_{k-1}}
			\left(\int_0^t w(s)\ds\right)^{\frac{r}{p}}
			w(t) \\
		&\qquad \times
			\left(
			\int_{t}^{t_k}
			\!\!\left(\int_{t}^{z}u(s)\ds\right)^{\!\!\frac{p}{p-m}} \hspace{-4pt}
			\left(\int_{z}^{\infty} \hspace{-4pt} v(s)^{1-p'}\ds\right)^{\!\!\!\frac{p(m-1)}{p-m}} \hspace{-10pt} v(z)^{1-p'}\dz
			\right)^{\!\!\!\frac{q(p-m)}{m(p-q)}}
			\hspace{-12pt}\dt
			\Bigg)^{\!\!\frac {1}{r}}\\
		&\quad +\!
			\Bigg(
			\sum_{k\in\K\setminus\{\!K\!\}}\!\int_{\Delta_{k\!-\!1}}\!\!\!
			\left(\int_0^t \!w(s)\!\ds\!\right)^{\!\!\frac{r}{p}}\!\!
			w(t)
			\left(
			\int_{t}^{t_k}
			\!\!\!\!u(s)\ds\!
			\right)^{\!\!\frac{r}{m}}\!\!\!
			\dt
			\left(\int_{t_k}^{\infty}\!\!\!v(s)^{1\!-\!p'}\!\ds\!\right)^{\!\!\frac{r}{p'}}
			\!\!\Bigg)^{\!\!\frac {1}{r}}\\
		&\quad +\!
			\Bigg(
			\sum_{k\in\K\setminus\{K\}}\int_{\Delta_{k-1}}
			\left(\int_0^t w(s)\ds\right)^{\frac{r}{p}}
			w(t)\dt \\
		&\qquad\times
			\left(
			\int_{t_k}^{\infty}\!\!
			\left(\int_{t_k}^{z}u(s)\ds
			\right)^{\!\!\frac{p}{p-m}}\!\!
			\left(\int_{z}^{\infty}\!\!v(s)^{1-p'}\ds\right)^{\!\!\frac{p(m-1)}{p-m}}\!\!\!\!\!v(z)^{1-p'}\dz
			\right)^{\!\!\frac{q(p-m)}{m(p-q)}}
			\Bigg)^{\!\!\frac {1}{r}}\\
		&=\B{16}+\B{17}+\B{18},
	\end{align*}
say. The three summands will be now estimated separately. First, by the definition of $\K_1$ and $\K_2$ and using \eqref{E:copson-7} and \eqref{E:copson-8}, one has
\allowdisplaybreaks
	\begin{align*}
		\B{16}
			&\approx
				\left(\sum_{k\in\K_1}\cdots\right)^{\frac{1}{r}}+\left(\sum_{k\in\K_2}\cdots\right)^{\frac{1}{r}}\\
			&\lesssim
				\Bigg(
				\sum_{k\in\K_1}
				\left(\int_0^{t_k} w(s)\ds\right)^{\frac{r}{q}}\\
			&\quad\times\left.\!\!
				\left(\int_{\Delta_{k-1}}\!\!
				\left(
				\int_{t_{k-1}}^{z}\!\!u(s)\ds\right)^{\!\!\frac{p}{p-m}}\!\!\!
				\left(\int_{z}^{\infty}\!\!v(s)^{1-p'}\ds\right)^{\!\!\frac{p(m-1)}{p-m}}\!\!\!\!v(z)^{1-p'}\dz
				\right)^{\!\!\frac{q(p-m)}{m(p-q)}}
				\right)^{\!\!\frac {1}{r}}\\
			&\quad+
				\left(
				\sum_{k\in\K_2}
				\vp(t_k)^r
				\left(\int_{\Delta_{k-1}}
				\left(\int_{z}^{\infty}v(s)^{1-p'}\ds\right)^{\frac{p(m-1)}{p-m}}v(z)^{1-p'}\dz
				\right)^{\frac{q(p-m)}{m(p-q)}}
				\right)^{\frac {1}{r}}\\
			&\lesssim
			\left(\vphantom{\Bigg[^\big[}
			\sum_{k\in\K_1}
			\left(\int_0^{t_{k-1}} w(s)\ds\right)^{\frac{r}{q}}\right.\\
			& \qquad \!\left.\times
			\left(\int_{\Delta_{k-1}}\!\!
			\left(
			\int_{t_{k-1}}^{z}\!\!\!\!u(s)\ds\!\right)^{\!\!\frac{p}{p-m}}\!\!
			\left(\int_{z}^{\infty}\!\!\!v(s)^{1-p'}\ds\!\right)^{\!\!\frac{p(m-1)}{p-m}}\!\!\!\!v(z)^{1-p'}\dz\!
			\right)^{\!\!\frac{q(p-m)}{m(p-q)}}
			\right)^{\!\!\frac {1}{r}}\\
			&\quad+
			\left(
			\sum_{k\in\K_2}
			\vp(t_{k-1})^r
			\left(\int_{\Delta_{k-1}}\!\!
			\left(\int_{z}^{\infty}\!v(s)^{1-p'}\ds\right)^{\!\!\frac{p(m-1)}{p-m}}\!\!\!\!v(z)^{1-p'}\dz
			\right)^{\!\!\frac{q(p-m)}{m(p-q)}}
			\right)^{\!\!\frac {1}{r}}\\
			&\lesssim
				\left(
				\sum_{k\in\K}
				\left(\int_{\Delta_{k-1}}
				\vp(z)^{\frac{mp}{p-m}}
				\left(\int_{z}^{\infty}v(s)^{1-p'}\ds\right)^{\!\!\frac{p(m-1)}{p-m}}v(z)^{1-p'}\dz
				\right)^{\!\!\frac{q(p-m)}{m(p-q)}}
				\right)^{\!\!\frac {1}{r}}\!\!.
	\end{align*}
Now, by a double use of integration by parts, as well as using the standard argument involving \eqref{E:copson-6} and Lemma \ref{L:20}, we get
	\begin{align*}
		\B{16}
			&\lesssim
				\left(
				\sum_{k\in\K}
				\vp(t_{k-1})^r
				\left(\int_{t_{k-1}}^{\infty}v(s)^{1-p'}\ds\right)^{\frac{r}{p'}}\right)^{\frac{1}{r}}\\
			&\quad +
				\left(
				\sum_{k\in\K}
				\left(\int_{\Delta_{k-1}}
				\vp(z)^{\frac{mp}{p-m}-1}\vp'(z)
				\left(\int_{z}^{\infty}v(s)^{1-p'}\ds\right)^{\!\!\frac{m(p-1)}{p-m}}\dz
				\right)^{\!\!\frac{q(p-m)}{m(p-q)}}
				\right)^{\!\!\frac {1}{r}}\\
			&\lesssim
			\left(
			\sum_{k\in\K}
			\vp(t_{k-1})^r
			\left(\int_{t_{k-1}}^{\infty}v(s)^{1-p'}\ds\right)^{\frac{r}{p'}}\right)^{\frac{1}{r}}\\
			&\quad +
			\left(
			\sum_{k\in\K}
			\left(\int_{\Delta_{k-1}}
			\vp(z)^{\frac{mp}{p-m}-1}\vp'(z)
			\left(\int_{z}^{t_k}v(s)^{1-p'}\ds\right)^{\!\!\frac{m(p-1)}{p-m}}\!\!\!\dz
			\right)^{\!\!\frac{q(p-m)}{m(p-q)}}
			\right)^{\!\!\frac {1}{r}}\\
			&\lesssim
				\left(
				\sum_{k\in\K}
				\vp(t_{k-1})^r
				\left(\int_{\Delta_{k-1}} v(s)^{1-p'}\ds\right)^{\frac{r}{p'}}\right)^{\frac{1}{r}} + D_4\\
			&\lesssim D_4.
	\end{align*}
As for $\B{17}$, by \eqref{E:copson-6} and Lemma \ref{L:20} one has
	\[
	\B{17} \le \left( \sum_{k\in\K} \vp(t_{k-1})^r \left(\int_{\Delta_{k-1}} v(s)^{1-p'}\ds\right)^{\frac{r}{p'}} \right)^{\frac{1}{r}} \lesssim D_4.
	\]
We proceed with the term $\B{18}$. Integrating by parts again, as well as using \eqref{E:copson-5} and Lemma \ref{L:20}, we obtain the following:
\Bdef{19}\Bdef{20}
\begin{align*}
\B{18}
&\lesssim
\left(\vphantom{\Bigg[^\big[}
\sum_{k\in\K\setminus\{K\}}
\left(\int_0^{t_k} w(s)\ds\right)^{\frac{p-m}{p-q}+\frac{m}{p-q}}\right.\\
&\qquad\times\left.\!
\left(\int_{t_k}^{\infty}
\left(\int_{t_k}^{z}u(s)\ds
\right)^{\!\!\frac{m}{p-m}}
\!u(z)
\left(\int_{z}^{\infty}\!\!v(s)^{1-p'}\ds\right)^{\!\!\frac{m(p-1)}{p-m}}\!\!\dz
\right)^{\!\!\!\frac{q(p-m)}{m(p-q)}}
\right)^{\!\!\frac {1}{r}}\\
&\lesssim
\left(\vphantom{\Bigg[^\big[}
\sum_{k\in\K\setminus\{K\}}\!\!
\left(\int_0^{t_k} \!\!w(s)\ds\right)^{\!\!\frac{p-m}{p-q}} \right.\\
&\left.\qquad\times
\left(\int_{t_k}^{\infty}
\vp(z)^{\frac{m^{2}}{p-m}}u(z)
\left(\int_{z}^{\infty}\!\!v(s)^{1-p'}\ds\right)^{\!\!\frac{m(p-1)}{p-m}}\!\!\!\!\dz
\right)^{\!\!\frac{q(p-m)}{m(p-q)}}
\right)^{\!\!\frac {1}{r}}\\
&\lesssim
\left(\vphantom{\Bigg[^\big[}
\sum_{k\in\K\setminus\{K\}}
\left(\int_0^{t_k} w(s)\ds\right)^{\frac{p-m}{p-q}}\right.\\
&\left.\qquad\times
\left(\int_{\Delta_k}
\vp(z)^{\frac{m^{2}}{p-m}}u(z)
\left(\int_{z}^{\infty}v(s)^{1-p'}\ds\right)^{\frac{m(p-1)}{p-m}}\dz
\right)^{\frac{q(p-m)}{m(p-q)}}
\right)^{\frac {1}{r}}\\
&\lesssim
\left(\vphantom{\Bigg[^\big[}
\sum_{k\in\K\setminus\{K\}}
\left(\int_0^{t_k} w(s)\ds\right)^{\frac{p-m}{p-q}}\right.\\
&\left.\qquad\times
\left(\int_{\Delta_k}
\vp(z)^{\frac{m^{2}}{p-m}}u(z)
\left(\int_{z}^{t_{k+1}}v(s)^{1-p'}\ds\right)^{\frac{m(p-1)}{p-m}}\dz
\right)^{\frac{q(p-m)}{m(p-q)}}
\right)^{\frac {1}{r}}\\
&\quad +
\left(\vphantom{\Bigg[^\big[}
\sum_{k\in\K\setminus\{K,K-1\}}
\left(\int_0^{t_k} w(s)\ds\right)^{\frac{p-m}{p-q}}\right.\\
&\left.\qquad\times
\left(\int_{\Delta_k}
\vp(z)^{\frac{m^{2}}{p-m}}u(z) \dz
\right)^{\frac{q(p-m)}{m(p-q)}}
\left(\int_{t_{k+1}}^{\infty}v(s)^{1-p'}\ds\right)^{\frac{r}{p'}}
\right)^{\frac {1}{r}}\\
&=\B{19}+\B{20},
\end{align*}
say. To deal with $\B{19}$ we recall the inequality \eqref{E:Hardy-lemma} which is valid here as well, since the relation between $p$ and $m$ is the same as in the case (c). Hence, the Hardy lemma gives
	\begin{align*}
		\B{19} &\lesssim \left(
		\sum_{k\in\K\setminus\{K\}}
		\left(\int_0^{t_k} \!\!w(s)\ds\right)^{\!\!\frac{p-m}{p-q}}\right.\\
		&\left.\quad\times \!\left(\int_{\Delta_k}
		\!\vp(z)^{\frac{m^{2}}{p-m}} \int_{t_k}^z \!u(y)\dy
		\left(\int_{z}^{t_{k+1}} \hspace{-5pt} v(s)^{1-p'}\ds\right)^{\!\!\frac{p(m-1)}{p-m}} \hspace{-12pt}  v(z)^{1-p'}\!\dz
		\right)^{\!\!\frac{q(p-m)}{m(p-q)}}
		\right)^{\!\!\frac {1}{r}}\\
		& \lesssim D_4,
	\end{align*}
since $\frac{m^2}{p-m}+m=\frac{mp}{p-m}$. In addition to this, the standard argument involving \eqref{E:copson-6} and Lemma \ref{L:20} shows that
	\begin{align*}
		\B{20} & \!\lesssim \!\!\left(\!
		\sum_{k\in\K\setminus\{K\!,K\!-\!1\}}\!\!\!\!
		\left(\int_0^{t_k} \hspace{-10pt}w(s)\!\ds\!\right)^{\hspace{-5pt}\frac{p-m}{p-q}}
		 \hspace{-6pt}\left(\int_{\!\Delta_k}
		\hspace{-6pt}u(z)\!\dz\!
		\right)^{\hspace{-4pt}\frac{q(p-m)}{m(p-q)}} \hspace{-12pt}\vp(t_{k+1})^{\frac{mq}{p-q}}
		\hspace{-4pt}\left(\int_{t_{k+1}}^{\infty}\hspace{-10pt}v(s)^{1-p'}\!\!\ds\!\right)^{\hspace{-4pt}\frac{r}{p'}}
		\!\right)^{\hspace{-5pt}\frac {1}{r}}\\
		& \lesssim \left(
		\sum_{k\in\K\setminus\{K,K-1\}}
				 \vp(t_{k+1})^{r}
		\left(\int_{t_{k+1}}^{\infty}v(s)^{1-p'}\ds\right)^{\frac{r}{p'}}
		\right)^{\frac {1}{r}},
		\end{align*}
which implies $ \B{20}\lesssim D_4$. Altogether, we have proved
	\[
	 A_5 \lesssim D_4.
	\]
What remains is to find an~estimate for $D_4$. By Lemma \ref{L:40}, there exists a~non-negative sequence $\{c_k\}_{k\in\K}$ satisfying $\sum c_k^p=1$ and such that
\begin{align*}
D_4 &=
\left(
\sum_{k\in\K}
c_k^q
\left(\int_{\Delta_{k-1}}
\vp(t)^{\frac{mp}{p-m}}
\left(\int_{t}^{t_k}v(s)^{1-p'}\ds\right)^{\!\!\frac{p(m-1)}{p-m}}
v(t)^{1-p'}\dt\right)^{\!\!\frac{q(p-m)}{pm}}\right)^{\!\!\frac{1}{q}}\!\!.
\end{align*}
For all $t>0$, let us define
	\[
		g(t)= \sum_{k\in\K} c_k g_k(t) \qquad \text{and} \qquad f(t)= \sum_{k\in\K} c_k f_k(t),
	\]
where the functions $g_k$ and $f_k$ are defined earlier (see \eqref{E:nutnost-2} and \eqref{E:nutnost-f}). Then, integrating by parts, using the saturation properties of $g_k$ and $f_k$ and recalling \eqref{E:diskr}, we get
\begin{align*}
D_4\!
&\approx\!\!
\left(
\sum_{k\in\K}
c_k^q
\vp(t_{k-1})^{q}
\left(\int_{\Delta_{k-1}}
v(s)^{1-p'}\ds\right)^{\frac{q}{p'}}\right)^{\frac{1}{q}}\\
&\quad +\hspace{-4pt}
\left(
\!\sum_{k\in\K}\!
c_k^q\hspace{-4pt}
\left(\!\int_{\Delta_{k\!-\!1}}\hspace{-6pt}
\left(\!\int_{t_{k-1}}^{t}\hspace{-13pt}\vp(s)^{m\!-\!1}\vp'(s)\!\ds\!\right)^{\hspace{-4pt}\frac{p}{p-m}}
\hspace{-7pt}\left(\int_{t}^{t_k}\hspace{-8pt}v(s)^{1-p'}\!\ds\!\right)^{\hspace{-5pt}\frac{p(m\!-\!1)}{p-m}}
\hspace{-18pt}v(t)^{1-p'} \!\!\dt\!\right)^{\hspace{-7pt}\frac{q(p-m)}{pm}}\right)^{\hspace{-6pt}\frac{1}{q}}\\
&\lesssim
\left(
\sum_{k\in\K}
c_k^q
\vp(t_{k-1})^{q}
\left(\int_{\Delta_{k-1}}
g_k(s)\ds\right)^{q}\right)^{\frac{1}{q}}\\
&\quad +
\left(
\sum_{k\in\K}
c_k^q
\left(\int_{\Delta_{k-1}}
\vp(y)^{m-1}\vp'(y)
\left(\int_{y}^{t_k}f_k(s)\ds\right)^{m}\dy\right)^{\frac{q}{m}}\right)^{\frac{1}{q}},
\end{align*}
and, consequently,
\begin{align*}
D_4
&\lesssim
\left(
\sum_{k\in\K}
\vp(t_{k-1})^{q}
\left(\int_{\Delta_{k-1}}
(g(s)+f(s))\ds\right)^{q}\right)^{\frac{1}{q}}\\
&\quad +
\left(
\sum_{k\in\K}
\left(\int_{\Delta_{k-1}}
\vp(y)^{m-1}\vp'(y)
\left(\int_{y}^{t_k}(g(s)+f(s))\ds\right)^{m}\dy\right)^{\frac{q}{m}}\right)^{\frac{1}{q}}\\
&\approx
\left(
\int_0^{\infty}
w(t)\left(\int_{t}^{\infty}u(s)\left(\int_{s}^{\infty}(g(y)+f(y))\dy\right)^m\ds\right)^{\frac{q}{m}}\dt\right)^{\frac{1}{q}} \\
&\le C\|g+f\|_{L^{p}(v)}.
\end{align*}
Since $\|f+g\|_{L^p(v)} \le 2$, this shows that
	\[
		D_4 \lesssim C,
	\]
and the proof is complete.
\end{proof}

\begin{proof}[Proof of Proposition \ref{T:dodatek}]
	For simplicity, we assume that $\int_{t}^{\infty}\!v(s)^{1-p'}\!\ds<\infty$ for every $t\in(0,\infty)$ (otherwise, the proof needs only a~slight modification in the same spirit as in the proof of Theorem \ref{T:main}). Then it is easy to observe that $A_1\lesssim A_6$. Thus, we need to prove that $C \lesssim A_5+A_6$, and that the reverse inequality holds if and only if $q>m$.
	
	Theorem \ref{T:main}(d) yields $C\approx A_3+A_5$. Hence, to prove that $C \lesssim A_5+A_6$ it suffices to show that $A_3\lesssim A_6$. Changing the order of integration, one has
		\begin{align*}
			\int_0^t w(s) \left( \int_s^t u(y) \dy \right)^\frac qm \ds & \approx \int_0^t w(s) \int_s^t \left( \int_s^z u(y)\dy \right)^{\frac qm -1} u(z) \dz \ds \\
																		& = \int_0^t \int_0^z w(s) \left( \int_s^z u(y)\dy \right)^{\frac qm -1} \ds\  u(z)  \dz.
		\end{align*}	
	Using this observation and partial integration, we get the following:
		\begin{align*}
			A_3^r  & =        \int_0^{\infty} \left(\int_0^tw(s)\left(\int_s^t u(y)\dy\right)^{\frac qm}\ds\right)^{\frac rp} w(t) \\
					& \qquad \times \sup_{z\in(t,\infty)} \left(\int_t^z u(y)\dy\right)^{\frac qm} \left(\int_z^{\infty}v(y)^{1-p'}\dy\right)^{\frac r{p'}}\dt \\
				   & \lesssim \int_0^{\infty} \left(\int_0^tw(s)\left(\int_s^t u(y)\dy\right)^{\frac qm}\ds\right)^{\frac rp} w(t) \\
				   & \qquad \times \int_t^\infty \left(\int_t^z u(y)\dy\right)^{\frac qm -1} u(z) \left(\int_z^{\infty}v(y)^{1-p'}\dy\right)^{\frac r{p'}} \dz \dt \\
				   & \lesssim \int_0^{\infty} \int_0^z \left(\int_0^tw(s)\left(\int_s^t u(y)\dy\right)^{\frac qm}\ds\right)^{\frac rp} w(t) \\
				   & \qquad \times \left(\int_t^z u(y)\dy\right)^{\frac qm -1} \dt \ u(z) \left(\int_z^{\infty}v(y)^{1-p'}\dy\right)^{\frac r{p'}} \dz \\
				   & \le      \int_0^{\infty} \left(\int_0^z w(s)\left(\int_s^z u(y)\dy\right)^{\frac qm}\ds\right)^{\frac rp} \\
				   &\qquad \times \int_0^z w(t) \left(\int_t^z u(y)\dy\right)^{\frac qm -1} \dt \ u(z) \left(\int_z^{\infty}v(y)^{1-p'}\dy\right)^{\frac r{p'}} \dz \\
				   & \lesssim \int_0^{\infty} \left(\int_0^z w(s)\left(\int_s^z u(y)\dy\right)^{\frac qm}\ds\right)^{\frac rq} \left(\int_z^{\infty}v(y)^{1-p'}\dy\right)^{\frac r{q'}} v^{1-p'}(z) \dz \\
				   & = A_6^r.
		\end{align*}	
	We have shown that
		$$ C \lesssim A_5+A_6 $$
	independently of any relation between $q$ and $m$.	
	
	As next, we want to show the opposite estimate for $q>m$. It may be done using the discretization from the proof of Theorem \ref{T:main}. However, the fact that $\frac qm>1$ allows to use the following $L^p$-duality argument which is simpler. We have
		\begin{align*}
			C & = \sup_{h\in\M_+}
					\frac
					{\left(\int_0^{\infty}\left(\int_t^\infty \left(\int_s^{\infty}h(y)\dy\right)^mu(s) \ds\right)^{\frac{q}{m}}w(t)\dt\right)^{\frac{m}{q}\cdot \frac 1m}}
					{\left(\int_0^{\infty}h(t)^pv(t)\dt\right)^{\frac{1}{p}}}\\
			  & = \sup_{h\in\M_+} \sup_{g\in\M_+}
				  	\frac
				  	{\left(\int_0^{\infty} \int_t^\infty \left(\int_s^{\infty}h(y)\dy\right)^mu(s) \ds\ g(t)\dt\right)^{\frac{1}{m}}}
				  	{\left(\int_0^{\infty}h(t)^p v(t)\dt\right)^{\frac{1}{p}} \left( \int_0^\infty g(t)^\frac{q}{q-m} w(t)^{-\frac{m}{q-m}} \dt \right)^\frac{q-m}{mq}}\\
			  & = \sup_{g\in\M_+} \frac 1{\left( \int_0^\infty g(t)^\frac{q}{q-m} w(t)^{-\frac{m}{q-m}} \dt \right)^\frac{q-m}{mq}} \\
			  & \qquad\qquad \times \sup_{h\in\M_+}
					  \frac
					  {\left(\int_0^{\infty} \left(\int_s^{\infty}h(y)\dy\right)^mu(s) \int_0^s g(t)\dt \ds\right)^{\frac{1}{m}}}
					  {\left(\int_0^{\infty}h(t)^p v(t)\dt\right)^{\frac{1}{p}} }\\
			  & \approx \sup_{g\in\M_+} \frac
				   {\!\left( \int_0^\infty \!\left( \int_0^t g(s) \int_s^t u(y) \dy \right)^\frac{p}{p-m} \!\left( \int_t^\infty v(y)^{1-p'} \dy \right)^\frac{p(m-1)}{p-m} \!\!v(t)^{1-p'} \!\dt \right)^{\!\!\frac{m-p}{mp}}}
				   {\left( \int_0^\infty g(t)^\frac{q}{q-m} w(t)^{-\frac{m}{q-m}} \dt \right)^\frac{q-m}{mq}}  \\
			  & \approx A_5 + A_6.
		\end{align*}	
	In the last two steps one uses the Hardy inequality (Lemma  \ref{L:35}(ii))	and the characterization of boundedness of a~Hardy operator with a~kernel from \cite[Theorem 1.2]{Oi}. Notice that instead of \cite{Oi}	one may also use the alternative characterization from \cite[Theorem 8]{K-complut} which gives $C \approx A_3 + A_5$, yet still only under the condition $q>m$ which is needed for the duality argument.
	
	If $m=q$, the equivalence $C \approx A_6$ is obtained by a~direct use of the Hardy inequality, and it is easily verified that $A_5\le A_6$. We omit the details. 													

	Now assume that $q<m$. We need to show that the inequality $A_6 \le \beta C$ is not satisfied for any positive constant $\beta$ independent of the involved weights. For $t>0$ and $n\in\N$, let us define
		$$
			u(t)\!=\!1, \quad v(t) \!= \!\chi_{[0,\frac12]}(t)  t^{\frac pm +p- 1} |\log t|^{\frac{p-q}{p'(q-1)}} + \chi_{(\frac12,\infty)}(t) e^t\!, \quad w_n(t)\!=\! n \chi_{[0,\frac1n]}(t).
		$$
	By monotonicity, for any $h\in\MM$ one has
		$$
			\left(\int_0^{\infty}\!\!\left(\int_t^\infty \!\!\left(\int_s^{\infty}\!\!\!h(y)\dy\!\right)^{\!m} \!\! u(s) \ds\!\right)^{\!\!\frac{q}{m}}\!\!\!w_n(t)\dt\!\right)^{\!\!\frac{1}{q}} \!\!\le\! \left(\int_0^{\infty} \!\!\left(\int_s^{\infty}\!\!\!h(y)\dy\right)^{\!m} \!\!\!\!u(s) \ds\!\right)^{\!\!\frac{1}{m}}\!\!.
		$$	
Hence, one has 
		\begin{align*}
		C_n & = \sup_{h\in\M_+}
		\frac
		{\left(\int_0^{\infty}\left(\int_t^\infty \left(\int_s^{\infty}h(y)\dy\right)^mu(s) \ds\right)^{\frac{q}{m}}w_n(t)\dt\right)^{\frac{1}{q}}}
		{\left(\int_0^{\infty}h(t)^pv(t)\dt\right)^{\frac{1}{p}}}\\
		&\le \sup_{h\in\M_+}
		\frac
		{ \left(\int_0^{\infty} \left(\int_s^{\infty}h(y)\dy\right)^mu(s) \ds\right)^{\frac{1}{m}} }
		{\left(\int_0^{\infty}h(t)^pv(t)\dt\right)^{\frac{1}{p}}}.
    \end{align*}
Using Lemma \ref{L:35}(ii) and a~careful computation of the exponents, we get
\begin{equation*}
		C_n \lesssim \left( \int_0^\infty \left( \int_0^t u(s)\ds \right)^\frac{p}{p-m} \left(\int_t^\infty v(s)^{1-p'} \ds \right)^{\!\!\frac{p(m-1)}{p-m}} \!\!v(t)^{1-p'} \dt \right)^{\!\!\!\!\frac{p-m}{mp}}\!\!\!\! < \infty.
\end{equation*}
On the other hand, at each point $t\in(0,\infty)$ we have
		$$
			0 \le \left(\int_0^tw_n(s)
			\left(\int_s^{t}u(y)\dy\right)^{\frac q{m}}\ds \right)^\frac rq \big\uparrow \left( \int_0^t u(y)\dy \right)^\frac rm,
		$$
		thus the monotone convergence theorem yields
		\begin{align*}
			A_{6,n} & =\! \left(\int_0^{\infty}\!\!
						\left(\int_0^tw_n(s)
						\left(\int_s^{t}u(y)\dy\right)^{\frac q{m}}\!\!\ds\right)^{\!\!\frac rq}\!\!
						\left(\int_t^{\infty}\!\!\!v(y)^{1-p'}\dy\right)^{\!\!\frac{r}{q'}}\!\!v(t)^{1-p'}\dt\right)^{\!\!\frac{1}{r}} \\
					& \qquad \xrightarrow{n\to\infty}
						\left(\int_0^{\infty}
						\left(\int_0^{t}u(y)\dy\right)^{\frac r{m}}
						\left(\int_t^{\infty}v(y)^{1-p'}\dy\right)^{\frac{r}{q'}}v(t)^{1-p'}\dt\right)^{\frac{1}{r}}.
		\end{align*}
By an~analysis of the exponents at the power and logarithmic functions, one may check that the limit term is infinite. Hence, we have shown that, for any given (but fixed) parameters $0<q<m<p$, $1<p<\infty$, there exist weights $u$, $v$, and a~sequence of weights $\{w_n\}_{n\in\N}$ such that the values of $C_n$ are uniformly bounded but the values of $A_{6,n}$ diverge to infinity. Thus, the inequality $A_6 \le \beta C$ does not hold with any $0<\beta<\infty$ independent of the involved weights. 	
\end{proof}

The proof of Theorem~\ref{T:main-p=1} needs only rather obvious changes compared to that of Theorem~\ref{T:main} and is therefore omitted. The assertion of Corollary~\ref{Cor} follows directly from the proof of Theorem~\ref{T:main}.

\subsection*{Acknowledgements}
We would like to thank both the referees for a thorough and critical reading of the manuscript including the highly technical parts and for bringing to our attention many valuable and useful comments. 

This research was supported by the grant P201/13/14743S and\linebreak P201/18/00580S of the Grant Agency of
the Czech Republic and by the Danube Region Grant no. 8X17028.

\end{document}